\crefname{hypothesis}{Hypothesis}{Hypotheses}
\title{A-posteriori error estimates for systems of hyperbolic conservation laws\thanks{Submitted to the editors \today.
\funding{ This work was funded by DFG in the
 Collaborative Research Centre
CRC/Transregio 154,
Mathematical Modelling, Simulation and Optimization Using the Example of Gas Networks,
Project  C05.}}}
\author{Jan Giesselmann\thanks{Technical University of Darmstadt, Department of Mathematics, Dolivostr. 15, 64293 Darmstadt,
  (\email{ giesselmann@mathematik.tu-darmstadt.de}).}
\and Aleksey Sikstel\thanks{University of Cologne, Department of Mathematics, Weyertal 86 – 90, 50931 Köln, Germany, \email{a.sikstel@uni-koeln.de}).}
}
\renewcommand{\vec}[1]{\boldsymbol{#1}}
\newcommand\R{\mathbb{R}}
\newcommand{\entropy}{e}
\newcommand{\entropyFlux}{q}
\newcommand{\consDom}{\mathbb{D}}
\newcommand{\dom}{\Omega}
\newcommand{\stDom}{\dom_T}
\newcommand{\pwConst}{\mathbf{P}_0}
\newcommand{\basisSub}{\mathbf{V}}
\newcommand{\basisSubPlus}{\basisSub^+}
\newcommand{\edges}[1]{\mathbf{E}\left(#1 \right)}
\newcommand{\edgess}[1]{\mathbf{E}'\left(#1 \right)}
\newcommand{\opB}{\mathcal{B}}
\newcommand{\opTestToFem}{\mathcal{P}}
\newcommand{\opE}{\mathcal{E}}
\newcommand{\opSemiG}{\mathcal{S}}
\newcommand{\node}{\nu}
\newcommand{\nodeSet}{\mathbf{N}}
\newcommand{\num}[1]{\widehat{#1}}
\newcommand{\numflux}{\widehat{\vec{f}}}
\newcommand{\tfct}{\varphi}
\newcommand{\ttfct}{\phi}
\newcommand{\tttfct}{\psi}
\newcommand{\av}[1]{\overline{#1}}
\newcommand{\avPhi}{\av{\varphi}}
\newcommand{\avPsi}{\av{\tttfct}}
\newcommand{\shockThresh}{\sigma}
\newcommand{\trapS}{S}
\newcommand{\trapG}{G}
\newcommand{\macroT}{\tau}
\newcommand{\osc}{\vec{\kappa}}
\newcommand{\locBscalar}[0]{\beta}
\newcommand{\locEscalar}[0]{\eta}
\newcommand{\estim}[0]{\vec{\mathfrak{E}}}
\newcommand{\diff}{\mathrm{D}}
\begin{document}
\maketitle

\begin{abstract}
  We provide rigorous and computable a-posteriori error estimates for first order finite-volume approximations of nonlinear systems of hyperbolic conservation laws in one spatial dimension. Our estimators rely on recent stability results by Bressan, Chiri and Shen and a novel method to compute negative order norms of residuals.  Numerical experiments show that the error estimator converges with the rate predicted by a-priori error estimates.  
\end{abstract}

\begin{keywords}
  systems of hyperbolic conservation laws, a-posteriori error estimates, finite-volume schemes, entropy solutions
\end{keywords}

\begin{AMS}
  35L40, 65M08, 65M15
\end{AMS}

\section{Introduction}
\label{sec:introduction}


We are concerned with the error analysis of numerical approximations of solutions to Cauchy-problems for one-dimensional systems of hyperbolic conservation laws defined by
\begin{align}
  \label{eq:pde}
  \begin{split}
    \vec{u}_t(t,x) + \vec{f}(\vec{u}(t,x))_x &= 0,\quad (t,x) \in [0,T] \times \R\\
    \vec{u}(0, x) = \vec{u}_0(x).
  \end{split}
\end{align}
Here $\vec{u} \,\colon\,  [0,T] \times \R \to \consDom \subset\R^m$ denotes the conserved quantity, $\vec{f} \in C^2(\consDom, \R^m)$ the flux function and $\vec{u}_0 \in L^{\infty}(\R, \consDom)$ is the initial condition. We assume that the system is strictly hyperbolic, i.e.~the Jacobian of the flux $\diff \vec{f}$ has $m$ distinct real  eigenvalues $\lambda_1(\vec{u}) < \cdots < \lambda_m(\vec{u})$ and the set of the corresponding eigenvectors forms an orthonormal basis. Furthermore, we assume that each characteristic field is either genuinely nonlinear or linear degenerate. Finally, we assume that the system is endowed with a strictly convex entropy $\entropy$ and corresponding entropy flux $\entropyFlux$, i.e.~a pair of functions $\entropy \,\colon\, \consDom \to \R$ and $\entropyFlux \,\colon\, \consDom \to \R$ satisfying $\diff \entropy\cdot \diff \vec{f} = \diff \entropyFlux$, see classical textbooks~\cite{dafermosHyperbolicConservationLaws2016} and~\cite{bressanHyperbolicSystemsConservation2000}.

Our goal is to provide rigorous a-posteriori error estimates for numerical schemes and for technical simplicity, to outline the ideas, we restrict ourselves to first order finite-volume schemes. Extending our method to higher order schemes is subject of future research. In general, a-posteriori error estimates require uniqueness of solutions and rely on a suitable stability theory of the underlying PDEs.
Indeed, it is well known that for one dimensional hyperbolic systems as specified above and data that have small total variation a Lipschitz continuous semi-group of entropy solutions exist. To be more precise, we rely on stability results provided by Bressan, Chiri and Shen~\cite{bressanPosterioriErrorEstimates2021a}. In~\cite{bressanPosterioriErrorEstimates2021a} these results are used to prove convergence of a quite general class of numerical schemes provided numerical solutions satisfy an a-posteriori verifiable criterion, i.e.~bounded total variation and bounded oscillation.  In contrast, we aim at providing a fully a-posteriori error estimate since this leads to sharper bounds and will serve as a basis for adaptive numerical schemes.

In this work, we consider solutions to the Cauchy-problem~(\ref{eq:pde}) that are constant outside of some space-time cylinder $\stDom := [0,T]\times\dom$ where $ \dom\subset\R$ is some non-empty interval. In order to define the finite-volume scheme, let $\stDom$ be discretized by grid points $(t^n, x_{j-\frac12})$  where  $n\in\{0,\ldots,N\}$ and $j\in\{0,\ldots, J\}$.  We denote  the corresponding time-steps by $\Delta t^{n+\frac12} := t^{n+1} - t^n$ and cell widths by $\Delta x_{j} := x_{j+\frac12}  - x_{j-\frac12} $. Finite-volume schemes approximate the solution in each cell by spatial averages
\[
  \num{\vec{u}}^n_{j} \approx \frac{1}{\Delta x_j}\int_{x_{j-\frac12}}^{x_{j+\frac12}} \vec{u}(t^n, x) \,dx,
\]
 where $\num{\vec{u}}^0$ consists of piece-wise averages of the initial condition $\vec{u}_0$ and the discrete evolution of these spatial averages is given by
\begin{equation}
  \label{eq:finite-volumes}
  \num{\vec{u}}^{n+1}_{j} = \num{\vec{u}}^n_{j} - \frac{\Delta t^{n+\frac12}}{\Delta x_j}\left( \numflux\left(\num{\vec{u}}^n_{j}, \num{\vec{u}}^n_{j+1}\right) - \numflux\left(\num{\vec{u}}^n_{j-1}, \num{\vec{u}}^n_{j}\right)  \right).
\end{equation}
The numerical flux $\numflux  \,\colon\, \consDom\times\consDom \to \R^m$  approximates $\vec{f}$ at the cell boundary $x_{j+\frac12}$. Since we consider explicit time-integration, the CFL-condition
restricting the timestep is necessary for the stability of the scheme.
For details on finite-volume schemes we refer to established literature, e.g.~\cite{godlewskiHyperbolicSystemsConservation1991},~\cite{toroRiemannSolversNumerical2009}. 

Finite volume solutions are interpreted as  functions in $L^{\infty}(\stDom)$ by setting
\[
  \num{\vec{u}}(t,x) = \vec{u}_j^n \,\text{ if }\, (t,x) \in  K_j^n:=[t^{n}, t^{n+1}) \times (x_{j-\frac12}, x_{j+\frac12}),
\]
i.e.~piecewise constant on rectangular space-time cells.   We define the space of such piecewise constant functions by
\begin{multline}
  \label{eq:kjn-pw-constant-def}
  \pwConst(\stDom) :=
  \Bigg\{  
  v \,\colon\, [0,T] \times \R \to \R \,\colon\,  v\Big|_{K_j^n}\equiv const\,\,\forall n\in\{0,\ldots, N\},\,j\in\{0,\ldots,J\},\\
 v\Big|_{[0,T] \times\left( -\infty,\, x_{-\frac12}\right)}  \equiv const,\,\,   v\Big|_{[0,T] \times \left(x_{J+\frac12},\, \infty\right)} \equiv const 
  \Bigg\}.
\end{multline}

In~\cite{bressanPosterioriErrorEstimates2021a}, estimates of the $L^\infty([0, T), L^1(\R))$-error for solutions $\num{\vec{u}}\in\pwConst{(\stDom)}^m $ produced by first order finite-volume schemes for Cauchy-problems are presented.
These estimates  hinge on suitable consistency and stability  results. 
The error estimates presented in~\cite{bressanPosterioriErrorEstimates2021a} depend on a rather abstract consistency criterion, equations~\eqref{eq:P_eps} and \eqref{eq:P_eps_entropy} below. It is proven in~\cite{bressanPosterioriErrorEstimates2021a}  that certain numerical schemes produce solutions satisfying this criterion with $\varepsilon$ going to zero under mesh refinement.

Indeed, error estimates are provided in~\cite{bressanPosterioriErrorEstimates2021a} if, firstly, there exists an $\varepsilon > 0$ such that the weak residual of the numerical solution can be bounded, i.e.
\begin{align}
  \label{eq:P_eps}
  \begin{split}
    &\left|   \int_{\R} \num{\vec{u}}(t^1, x)\varphi(t^1, x)\, dx - \int_{\R} \num{\vec{u}}(t^2, x)\varphi(t^2, x)\, dx + \int_{t^1}^{t^2} \int_{\R} \left(  \num{\vec{u}}\varphi_t + \vec{f}(\num{\vec{u}})\varphi_x \right)\,dx\,dt\right|\\
    &\quad\leq C\varepsilon \|\varphi\|_{W^{1,\infty}} (t^2 - t^1)\sup_{t\in[t^1, t^2]} TV[\num{\vec{u}}(t, \,\cdot\,)] \quad \forall  \varphi \in C^{1}_c(\R^2), \quad \forall\, 0 \leq t^1 < t^2 \leq T, 
  \end{split}      
\end{align}
where $C > 0$ is a constant, $\varphi$ denotes a \emph{scalar} test function with compact support, and $t^1 < t^2$ with $t^1, t^2 \in [0, T]$  (cf.~condition $(\mathrm{P}_{\varepsilon})$ in~\cite{bressanPosterioriErrorEstimates2021a}). In addition, the same $\varepsilon > 0$ needs to bound the violation of the entropy inequality, i.e.
  \begin{align}
    \label{eq:P_eps_entropy}
    \begin{split}
      & \int_{\R} \entropy\left(\num{\vec{u}}(t^2, x)\right)\varphi(t^2, x)\, dx - \int_{\R} \entropy\left(\num{\vec{u}}(t^1, x)\right)\varphi(t^1, x)\, dx \\
      &\quad\quad\quad\quad\quad- \int_{t^1}^{t^2} \int_{\R} \left(  \entropy\left(\num{\vec{u}}\right)\varphi_t + \entropyFlux(\num{\vec{u}})\varphi_x \right)\,dx\,dt\\
      &\leq C\varepsilon \|\varphi\|_{W^{1,\infty}} (t^2 - t^1)\sup_{t\in[t^1, t^2]} TV[\num{\vec{u}}(t, \,\cdot\,)] \quad \forall  \varphi \in C^{1}_c(\R^2),\, \varphi \geq 0,\,\, \forall\, 0 \leq t^1 < t^2 \leq T.
    \end{split}
  \end{align}

  These consistency results are combined in~\cite{bressanPosterioriErrorEstimates2021a} with stability estimates that are obtained  by comparing $\num{\vec{u}}$ with the solution of a linearized PDE  in regions where $\num{\vec{u}}$  is smooth, whereas in regions of large jumps $\num{\vec{u}}$ is compared to solutions of Riemann problems. The key condition a numerical solution needs to satisfy so that the results from~\cite{bressanPosterioriErrorEstimates2021a} are applicable is uniformly  bounded total variation and uniformly bounded oscillations. Both these properties are a-posteriori verifiable. The conceptual difference between the work at hand and~\cite{bressanPosterioriErrorEstimates2021a} is that we aim at actually computing values for $\varepsilon$ from the numerical solution instead of proving a-priori convergence rates. This leads to smaller values of $\varepsilon$.

    The starting point of our work is the observation that the estimates (\ref{eq:P_eps}) and~(\ref{eq:P_eps_entropy}) measure residuals in the $W^{-1,1}$-norm. Here, ``residual''  refers to the quantity that is obtained when the numerical solution is inserted into the weak form of the PDE and of the entropy inequality, respectively.
    Let us note that by the very definition of residual, i.e. the left hand sides of \eqref{eq:P_eps} and \eqref{eq:P_eps_entropy}, this approach can only provide  error estimates for numerical solutions such that $\num{\vec u}(t,x) \in \consDom$ for almost all $(t,x)$.

    The consistency estimates in~\cite{bressanPosterioriErrorEstimates2021a} are already local in time but in order to obtain results that can serve as a basis for mesh-adaptation we also need to localize them in space. Moreover, using an infinite-dimensional space of test functions is not practical and we need to show that this can be replaced by a finite-dimensional  space (that is in fact three-dimensional).

  Let us compare our results with a-posteriori error estimates for hyperbolic conservation laws that are available in the literature.
  Early results  concerning wave-front tracking and the Glimm-scheme are due to~\cite{laforestmarcetiennePosterioriErrorEstimate2001, kimAdaptiveVersionGlimm2010, laforestPosterioriErrorEstimate2004}. For scalar problems, strong stability results are available: $L^1$-contraction based on Kru\v{z}hkov doubling of variables~\cite{kruzkovFirstOrderQuasilinear1970}  was used for a-posteriori estimates in~\cite{kronerPosterioriErrorEstimates2000}. Results for scalar problems in 1D provided  in~\cite{karni2005local}, based on stability results in~\cite{nessyahu1994convergence}, are conceptually somewhat similar to our results since they measure residuals in dual norms. However, we are able to provide rigorous upper bounds for these dual norms by introducing a suitable  projection.

  Error estimates for systems of hyperbolic conservation laws based on relative entropy were obtained in~\cite{jovanovicErrorEstimatesFinite2006, giesselmannPosterioriAnalysisDiscontinuous2015a,dednerPosterioriAnalysisFully2016}. They do not provide informative bounds if the exact solution is discontinuous, since the classical relative entropy does not provide stability for discontinuous solutions.
  In one spatial dimension, the limitations of the relative entropy method can be mitigated by the theories of shifts and $a$-contractions, see~\cite{krupa2019uniqueness} and~\cite{chen2022uniqueness}. 
  This opens the door for improved relative entropy based error estimators, see the forthcoming paper~\cite{giesselmann2022theory}.
  
  Finally, goal-oriented error estimates were considered by~\cite{hartmannAdaptiveDiscontinuousGalerkin2002} but require the solution of adjoint problems for which well-posedness in case of systems is unclear, even in one space dimension, if the primal solution is discontinuous.

  The outline of this paper is as follows. In the second section we explain how the bounds for the residual can be localized and be made computable. In Section~3 we discuss how oscillation bounds that enter the error estimates can be computed efficiently. We also explain how the bounds for residuals and oscillations can be combined to provide  a-posteriori error estimates.  In the final section we present numerical experiments showing that the error bounds scale as predicted by the a-priori analysis in~\cite{bressanPosterioriErrorEstimates2021a}.


\section{Weak residuals}\label{sec:weak-formulation}

As pointed out before, (\ref{eq:P_eps}) and (\ref{eq:P_eps_entropy}) measure the dual norm of the residual.  In this chapter we replace global in space dual norms by local in space dual norms and we show how to compute them.

\subsection{Localizing weak residuals}\label{sec:localizing}  
We define a weak residual on each space-time cell and the sum of  their $W^{-1,1}$-norms will be a key ingredient in the error estimator in Section~\ref{sec:analys-postr-estim}.

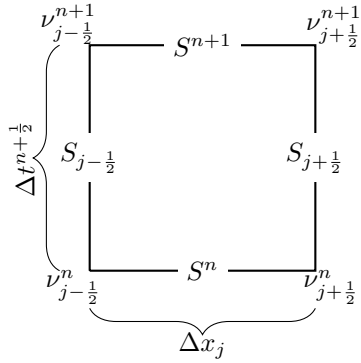
\begin{figure}[!htb]
  \centering


\begin{tikzpicture}
  \path (0,0) coordinate (A)
  (3,0) coordinate (B)
  (3,3) coordinate (C)
  (0,3) coordinate (D);

  \draw[thick] (A) -- (B) -- (C) -- (D) -- (A);

  \draw (0,1.5) + (0:0mm) node[fill=white]  {$S_{j-\frac12}$};
  \draw (3,1.5) + (0:0mm) node[fill=white]  {$S_{j+\frac12}$};
  \draw (1.5,0) + (-90:0mm) node[fill=white]  {$S^{n}$};
  \draw (1.5,3) + (90:0mm) node[fill=white]  {$S^{n+1}$};
  
  \draw (A) + (225:3mm) node  {$\nu_{j-\frac12}^{n}$};
  \draw (B) + (-45:3mm) node  {$\nu_{j+\frac12}^{n}$};
  \draw (C) + (45:3.5mm) node  {$\nu_{j+\frac12}^{n+1}$};
  \draw (D) + (135:4mm) node  {$\nu_{j-\frac12}^{n+1}$};

  \draw [decorate,decoration={brace,amplitude=10pt, mirror},yshift=-5mm]
  (0.0,0.0) -- (3,0) node [black,midway, yshift=-5mm] 
  {$\Delta x_{j}$};
  \draw [decorate,decoration={brace,amplitude=10pt},xshift=-4mm]
  (0.0,0.0) -- (0,3) node [black,midway, xshift=-5mm,rotate=90] 
  {$\Delta t^{n+\frac12}$};
\end{tikzpicture}

  \caption{Space-time element $K_j^n$ with corresponding edges $S$. }\label{fig:space-time-elm}
\end{figure}

As a first step, we provide some notation for space-time cells. Let the set of nodes of a cell $K_j^{n}$ be denoted by $\nodeSet\left(K_j^{n}\right)$, i.e. 
\[
  \nodeSet\left(K_j^{n}\right):=\left\{\node^{n}_{j-\frac12},\, \node^{n}_{j+\frac12},\, \node^{n+1}_{j+\frac12},\, \node^{n+1}_{j-\frac12}\right\}, \text{ with } \node^\alpha_\beta:=(t^\alpha,\, x_\beta)^T
\]
and the set of edges of $K_j^n$ by $\edges{K_j^n}$, as illustrated in Figure~\ref{fig:space-time-elm}, where
\begin{align}
  \label{eq:surfaces-def}
  \begin{split}
    &\edges{K_j^n} := \{S_{j-\frac12},\, S_{j+\frac12},\, S^{n},\, S^{n+1}\},\\
    &S_{j-\frac12} := \overline{\nu_{j-\frac12}^{n}\nu_{j-\frac12}^{n+1}},\quad
    S_{j+\frac12} := \overline{\nu_{j+\frac12}^{n}\nu_{j+\frac12}^{n+1}},\\
    &S^{n} := \overline{\nu_{j-\frac12}^{n}\nu_{j+\frac12}^{n}},\quad
    S^{n+1} := \overline{\nu_{j-\frac12}^{n+1}\nu_{j+\frac12}^{n+1}}.
  \end{split}
\end{align}

Given a  piecewise-constant function, $\num{\vec{u}} \in\pwConst{(\stDom)}^m$, that is constant for $x \notin \Omega$, we aim to compute the smallest $\varepsilon$ satisfying both inequalities (\ref{eq:P_eps}) and (\ref{eq:P_eps_entropy}). Neither the  weak residual,~(\ref{eq:P_eps}), nor the weak entropy dissipation residual,~(\ref{eq:P_eps_entropy}), should be split into single cells directly, i.e.~by splitting the domains of the integrals. Such a naive localization would be inconsistent since integration by parts  in space leads to boundary terms.   In the following we first treat the weak residual (\ref{eq:P_eps}) and apply the same procedure to the entropy dissipation residual (\ref{eq:P_eps_entropy}).
  
Localizing the weak residual  on a space-time cell $K_j^n$ requires fluxes at the edges $S_{j-\frac12}$ and $S_{j+\frac12}$. A natural choice is to assign numerical fluxes $\num{\vec{f}}$ of a first-order finite-volume scheme.   Note that the numerical flux function does not have to be the one that was used to produce $\num{\vec{u}}$, but we will actually use this flux for efficiency reasons, see Remark \ref{rem:stencil}

\begin{definition}\label{def:1}
 Let the space-time domain be partitioned as\, $\stDom = \bigcup_{j,n}\overline{K^n_j}$ and some numerical flux function $\num{\vec{f}}$ as well as $\num{\vec{u}} \in \pwConst(\Omega_T)^m$ be given. Then, the linear local weak residual operator on $K_j^n$  is defined as
    \begin{align}
      \begin{split}\label{eq:b-op-def}
      &\opB_j^n[\num{\vec{u}}] \,\colon\,  W^{1,\infty}(K_j^n)  \to \R^m,\\
        &\varphi \mapsto \int_{x_{j-\frac12}}^{x_{j+\frac12}} \num{\vec{u}}^{n}_{j} \tfct(t^{n},x)\, dx
        - \int_{x_{j-\frac12}}^{x_{j+\frac12}} \num{\vec{u}}^{n+1}_{j}\tfct(t^{n+1}, x)\, dx\\
        &\quad\quad\quad\quad + \int_{K_j^n} \num{\vec{u}}^{n}_{j}\tfct_t(t,x) + \vec{f}(\num{\vec{u}}^{n}_{j})\tfct_x(t,x)\,dx\,dt\\
        &\quad\quad\quad\quad +\int_{t^{n}}^{t^{n+1}}\!\!\! \numflux\left(\num{\vec{u}}^n_{j-1}, \num{\vec{u}}^n_{j}\right) \tfct(t, x_{j-\frac12}) \,dt -\int_{t^{n}}^{t^{n+1}} \!\!\!\numflux\left(\num{\vec{u}}^n_{j}, \num{\vec{u}}^n_{j+1}\right) \tfct(t,x_{j+\frac12}) \,dt.
      \end{split}
    \end{align}
 For the reason of consistency, we denote the constant value on the left- and right-hand side boundary, i.e.~in $[0,T]\times(-\infty, \, x_{-\frac12})$ and $[0,T]\times(x_{J+\frac12}, \infty)$, by  $\num{\vec{u}}_{-1}$ and $\num{\vec{u}}_{J+1}$, respectively. 
\end{definition}
\begin{remark}\label{rem:stencil}
  The localized  weak residual has the same locality as the finite volume scheme, i.e.~the stencil consists of only three cells. Therefore, well-established implementation techniques, such as distributed memory parallelization with spatial domain decomposition, can be applied for computing $\opB_j^n$ as well. In addition, the numerical fluxes of the scheme may be saved and recycled for evaluation of $\opB_j^n$ increasing the efficiency of the computations drastically.  
\end{remark}
We denote the averages $\fint_E g\, dS$ of a sufficiently regular function $g \,\colon\, K_j^n \to \R$ restricted onto edges $S_{j\pm \frac12}$ and $S^{n,n+1}$ in $\edges{K^n_j}$  by
\begin{align}
  \label{eq:def-average}
  \begin{split}
    &\av{g}_{\zeta} := \fint_{t^n}^{t^{n+1}}\!\!\!\!\lim_{x\to x_\zeta} g(t,x)\,dt = \frac{1}{\Delta t^{n+1/2}}\int_{t^n}^{t^{n+1}}\!\!\!\!\lim_{x\to x_\zeta} g(t,x)\,dt,\\
    &\av{g}^{\mu }:= \fint_{x_{j-\frac12}}^{x_{j+\frac12}}\lim_{t\to t^\mu}g(t,x)\,dx = \frac{1}{\Delta x_{j}}\int_{x_{j-\frac12}}^{x_{j+\frac12}}\lim_{t\to t^\mu}g(t,x)\,dx,
    \end{split}
\end{align}
where $\zeta\in\{j-\frac12, j+\frac12\}$ and $\mu\in\{n,n+1\}$, respectively.  Since the numerical solution is constant  in the interior of the space-time cell $K_j^n$, applying the Gauss-theorem on the space-time integral in~\eqref{eq:b-op-def} allows to rewrite $\opB_j^n$ as follows:
\begin{align}
  \label{eq:b-op-simplified}
  \begin{split}
    \opB_j^n[\num{\vec{u}}](\tfct) &= \Delta x_{j}\left(\num{\vec{u}}^n_{j} - \num{\vec{u}}^{n+1}_{j}\right)\avPhi^{n+1}
    + \Delta t^{n+\frac12}\vec{f}(\num{\vec{u}}_{j}^n)\left( \avPhi_{j+\frac12} - \avPhi_{j-\frac12}  \right)\\
    &+\Delta t^{n+\frac12}\left(\numflux\left(\num{\vec{u}}^n_{j-1}, \num{\vec{u}}^n_{j}\right) \avPhi_{j-\frac12} - \numflux\left(\num{\vec{u}}^n_{j}, \num{\vec{u}}^n_{j+1}\right) \avPhi_{j+\frac12} \right)
  \end{split}
\end{align}
where the term $\Delta x_{j}\num{\vec{u}}^n_{j} \avPhi^{n}$ cancels out.
From now on, we use  \eqref{eq:b-op-simplified} as the definition of $\opB_j^n$. The values of $\opB_j^n[\num{\vec{u}}]( \tfct)$ are determined by the values $\num{\vec{u}}_j^{n+1}$ and  $\num{\vec{u}}_j^n$, the numerical fluxes to the neighboring space-time cells and by averages of the test function $\tfct$ along three edges of $K_j^n$.

\subsection{Computable bounds for local weak residuals}
Computing bounds in (\ref{eq:P_eps}) and (\ref{eq:P_eps_entropy}) seems to require testing with an infinite set of test functions which is not practical.  We devise a suitable projection operator mapping Lipschitz continuous test functions into a finite dimensional space that allows us to bound the $W^{-1,1}$-norms of weak residuals by their operator norms on this finite dimensional space. We give the details of this procedure for (\ref{eq:P_eps});  (\ref{eq:P_eps_entropy}) can be treated analogously.

Equation~\eqref{eq:b-op-simplified} shows that the value of $ \opB^n_j[ \num{\vec{u}}] (\varphi)$ depends on a small number of scalar values and motivates the search for a low dimensional test space whose elements are characterized by averages on the edges in $\edgess{K_j^n} := \left\{S_{j-\frac12}, \, S^{n+1}, \, S_{j+\frac12}  \right\}$, i.e.~the edges of $K_j^n$ that enter~(\ref{eq:b-op-simplified}).
Such a space is provided by affine linear functions on $K_j^n$, i.e.,
  \begin{multline}
    \label{eq:hat-fcts-subspace-t}
    \basisSub(K^{n}_j) := \biggl\{v\,\colon\, K^{n}_j\to \R,\, v=\alpha_1 + \frac{(t^{n+1} - t )}{\Delta t^{n+\frac12}}\alpha_2 + \frac{( x_{j+\frac12} - x)}{\Delta x_{j}}\alpha_3,\,\\ \alpha_i \in \R,\, i\in\{1,2,3\}  \biggr\},
  \end{multline}
  which we equip with the $W^{1,\infty} $ norm. 
  We define a projection from the test functions to $\basisSub(K^{n}_j)$ that preserves the averages on $\edgess{K_j^n}$:   
  \begin{align}
    \label{eq:p-def}
    \begin{split}
      &\opTestToFem^{n}_j \,\colon\, W^{1,\infty}(K^{n}_j) \to \basisSub(K^{n}_j),\\
      &\tfct \mapsto  \opTestToFem^{n}_j\tfct  \text{ with } \fint_S{\opTestToFem^{n}_j\tfct}\, dS = \fint_S{\tfct}\, dS,\,  S \in \edgess{K_j^n},
    \end{split}
  \end{align}
 This allows us to efficiently compute an estimate for $\| \opB_j^n[\num{\vec{u}}]\|_{\mathcal{L}(W^{1,\infty}(K^n_j),\, \R^m)}$, given some piecewise-constant function $\num{\vec{u}}\in \pwConst(\stDom)^m$.
  \begin{proposition}
    \label{prop:Pjn-estimate}
   The operator  $\opTestToFem^{n}_j$  is well-defined and linear and for any $\tfct \!\in\! W^{1,\infty}(K^{n}_j)$
    \begin{equation}
      \label{eq:P-is-cont}
      \| \opTestToFem^{n}_j\tfct \|_{W^{1,\infty}(K^{n}_j)} \leq  \max\left\{3,\, \sqrt{8+8(c_j^n)^2}\right\}\|\tfct \|_{W^{1,\infty}(K^{n}_j)},
    \end{equation}
    where  $c_j^n := \frac{\Delta t^{n+\frac12}}{\Delta x_j}$.
  \end{proposition}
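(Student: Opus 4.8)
The plan is to reduce the whole statement to a $3\times 3$ linear--algebra problem for the coefficients $\alpha_1,\alpha_2,\alpha_3$ and then to control these coefficients by the three edge averages of $\tfct$, which are in turn bounded by $\|\tfct\|_{W^{1,\infty}(K^n_j)}$. First I would settle well-definedness and linearity. Since every $v\in\basisSub(K^n_j)$ is affine, its average over an edge equals its value at the edge midpoint, so the three matching conditions in \eqref{eq:p-def} become a linear system for $(\alpha_1,\alpha_2,\alpha_3)$,
\begin{align*}
  \av{\tfct}_{j+\frac12} &= \alpha_1 + \tfrac12\alpha_2,\\
  \av{\tfct}^{\,n+1} &= \alpha_1 + \tfrac12\alpha_3,\\
  \av{\tfct}_{j-\frac12} &= \alpha_1 + \tfrac12\alpha_2 + \alpha_3.
\end{align*}
The coefficient matrix has determinant $-\tfrac12\neq 0$, which gives unique solvability, hence well-definedness; linearity of $\opTestToFem^{n}_j$ is then immediate, since the edge averages depend linearly on $\tfct$ and the solution depends linearly on the averages. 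Solving explicitly yields $\alpha_3=\av{\tfct}_{j-\frac12}-\av{\tfct}_{j+\frac12}$, $\alpha_2=\av{\tfct}_{j-\frac12}+\av{\tfct}_{j+\frac12}-2\av{\tfct}^{\,n+1}$ and $\alpha_1=\av{\tfct}^{\,n+1}-\tfrac12\alpha_3$; these formulas are what I would carry into the norm estimate.

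For the $L^\infty$ part, which produces the constant $3$, I would use that $\opTestToFem^{n}_j\tfct$ is affine, so its modulus is maximised at a vertex of $K^n_j$, the four vertex values being $\alpha_1$, $\alpha_1+\alpha_2$, $\alpha_1+\alpha_3$ and $\alpha_1+\alpha_2+\alpha_3$. Substituting the formulas above, each vertex value is a linear combination of $\av{\tfct}_{j-\frac12}$, $\av{\tfct}^{\,n+1}$, $\av{\tfct}_{j+\frac12}$ whose coefficients have absolute values summing to at most $3$, the extreme case being the vertex $\nu_{j-\frac12}^{n}$, where the value equals $-\av{\tfct}^{\,n+1}+\tfrac32\av{\tfct}_{j-\frac12}+\tfrac12\av{\tfct}_{j+\frac12}$. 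Since each of these three edge averages is bounded in modulus by $\|\tfct\|_{L^\infty(K^n_j)}\le\|\tfct\|_{W^{1,\infty}(K^n_j)}$, I obtain $\|\opTestToFem^{n}_j\tfct\|_{L^\infty(K^n_j)}\le 3\|\tfct\|_{W^{1,\infty}(K^n_j)}$.

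The gradient part is the heart of the argument. The partial derivatives of the affine function are constant, namely $\partial_x(\opTestToFem^{n}_j\tfct)=-\alpha_3/\Delta x_j$ and $\partial_t(\opTestToFem^{n}_j\tfct)=-\alpha_2/\Delta t^{n+\frac12}$. The spatial derivative is harmless: writing $\alpha_3$ as a difference of traces on the two vertical edges and integrating $\partial_x\tfct$ across the cell gives $|\alpha_3|\le\Delta x_j\|\partial_x\tfct\|_{L^\infty(K^n_j)}$, hence $|\partial_x(\opTestToFem^{n}_j\tfct)|\le\|\tfct\|_{W^{1,\infty}(K^n_j)}$. The delicate term is $\alpha_2$, which couples averages living on \emph{different} edges of $\edgess{K^n_j}$. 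Here I would compare $\av{\tfct}_{j-\frac12}$, $\av{\tfct}_{j+\frac12}$ and $\av{\tfct}^{\,n+1}$ with the values of $\tfct$ at the shared corners $\nu_{j-\frac12}^{n+1}$ and $\nu_{j+\frac12}^{n+1}$: each side-edge average differs from the corresponding corner value by at most $\tfrac12\Delta t^{n+\frac12}\|\partial_t\tfct\|_{L^\infty}$, while the top-edge average differs by at most $\tfrac12\Delta x_j\|\partial_x\tfct\|_{L^\infty}$. Collecting these estimates bounds $|\alpha_2|$ by a weighted sum of $\Delta t^{n+\frac12}\|\partial_t\tfct\|_{L^\infty}$ and $\Delta x_j\|\partial_x\tfct\|_{L^\infty}$, so that after dividing by $\Delta t^{n+\frac12}$ the anisotropy of the cell enters through the mesh ratio $c^n_j$. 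Combining the two constant partial derivatives in the $W^{1,\infty}$ gradient norm and applying $(a+b)^2\le 2a^2+2b^2$ then produces the factor $\sqrt{8+8(c^n_j)^2}$, and the claim follows upon taking the maximum with the $L^\infty$ constant $3$.

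I expect the main obstacle to be exactly this last estimate. Bounding $\alpha_2$ must be done by routing through the corner values shared by adjacent edges, rather than by naively comparing averages on non-adjacent edges, and one has to keep track of the numerical constants so that the mesh ratio appears with the correct power and the constants $8$ and $8(c^n_j)^2$ emerge as stated; the $L^\infty$ estimate and the well-definedness are comparatively routine.
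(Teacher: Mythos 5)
Your reduction to the $3\times3$ system is correct and structurally the same as the paper's; in fact your system (edge average $=$ midpoint value, determinant $-\tfrac12$) is the right one for the basis as written in \eqref{eq:hat-fcts-subspace-t} — the matrix printed in \eqref{eq:system-P-coeff} corresponds to an $x$-centered basis, which only flips the sign of $\alpha_3$ and is immaterial for the estimates. Your $L^\infty$ bound via the four vertex values, with the extreme combination $-\av{\tfct}^{\,n+1}+\tfrac32\av{\tfct}_{j-\frac12}+\tfrac12\av{\tfct}_{j+\frac12}$ producing the constant $3$, and your bound $|\alpha_3|\le\Delta x_j\|\partial_x\tfct\|_{L^\infty}$ coincide with the paper's proof.

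The gap is the final step of the $\alpha_2$ estimate. Your corner-comparison device (different from the paper's, which picks mean-value points $\xi_1,\xi_2,\xi_3$ on the three edges and uses the Lipschitz property) correctly yields
\begin{equation*}
|\alpha_2| \le \Delta t^{n+\frac12}\|\partial_t\tfct\|_{L^\infty} + \Delta x_j\|\partial_x\tfct\|_{L^\infty},
\qquad\text{hence}\qquad
\frac{|\alpha_2|}{\Delta t^{n+\frac12}} \le \Bigl(1+\tfrac{1}{c_j^n}\Bigr)\|\tfct\|_{W^{1,\infty}} \le \sqrt{2+2(c_j^n)^{-2}}\,\|\tfct\|_{W^{1,\infty}}.
\end{equation*}
The mesh ratio enters \emph{inverted}, and no application of $(a+b)^2\le 2a^2+2b^2$ can convert $(c_j^n)^{-2}$ into $(c_j^n)^{2}$: your claim that the constants $8$ and $8(c_j^n)^2$ ``emerge as stated'' is asserted, not derived, and it cannot be derived, because the stated constant is not a valid bound for small $c_j^n$. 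Indeed, take $\tfct(t,x)=|x-x_j|$ with $x_j$ the cell center: then $\alpha_3=0$, $\alpha_2=\Delta x_j/2$, and $\|\tfct\|_{W^{1,\infty}}=1$ for $\Delta x_j\le 2$, so $|\alpha_2|/\Delta t^{n+\frac12}=1/(2c_j^n)$, which exceeds $\max\{3,\sqrt{8+8(c_j^n)^2}\}$ once $c_j^n<\tfrac16$. To be fair, the paper's own proof stumbles at exactly the same spot: it correctly derives $|\alpha_2|\le 2\sqrt2\,\|\tfct\|_{W^{1,\infty}}\sqrt{(\Delta t^{n+\frac12})^2+(\Delta x_j)^2}$, but the substitution $\Delta t^{n+\frac12}=c_j^n\Delta x_j$ gives $\sqrt{8+8(c_j^n)^{-2}}\,\Delta t^{n+\frac12}\,\|\tfct\|_{W^{1,\infty}}$, not $\sqrt{8+8(c_j^n)^{2}}\,\Delta t^{n+\frac12}\,\|\tfct\|_{W^{1,\infty}}$. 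So the honest conclusion of your argument — which is sound and in fact slightly sharper than the paper's corrected bound ($\sqrt{2+2c^{-2}}$ versus $\sqrt{8+8c^{-2}}$) — is \eqref{eq:P-is-cont} with $(c_j^n)^2$ replaced by $(c_j^n)^{-2}$; that version still gives a grid-independent constant whenever the CFL ratio is bounded above and away from zero, which is what the subsequent propositions actually use.
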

   \begin{remark}
   Note that in practice one would want to use (\ref{eq:finite-volumes}) with a CFL-conditions which means that $c_j^n > 0$ is bounded from above by the maximal wave speed.
 \end{remark}
 
  \begin{proof}    
 Since  the cell $K_j^n$ is a domain with Lipschitz-boundary, the trace of any $\tfct  \in W^{1,\infty}(K_j^n)$
    is well defined  on each edge.

 Given the  averages of the test function $\vec{\avPhi} = \left(\avPhi_{j-\frac12},\, \avPhi^{n+1},\, \avPhi_{j+\frac12}  \right)^T$,  the coefficients $\vec{\alpha} := (\alpha_1, \alpha_2,\alpha_3)^T$ of $\opTestToFem^{n}_j\tfct\in\basisSub(K^n_j)$  are determined by the solution of the linear system    
    \begin{equation}
      \label{eq:system-P-coeff}
      \begin{pmatrix}
        1&1/2&-1/2\\
        1&0&0\\
        1&1/2&1/2
      \end{pmatrix}
      \vec{\alpha}
      =
      \vec{\avPhi}
      \quad \Leftrightarrow \quad
      \vec{\alpha} =
      \begin{pmatrix}
        0&1&0\\
        1&-2&1\\
        -1&0&1
      \end{pmatrix}
     \vec{\avPhi}.
   \end{equation}
    The matrix  describing the mapping from the averages to the coefficients $\vec{\alpha}$, as defined in (\ref{eq:system-P-coeff}), is invertible. Hence, prescribing the averages along the edges yields a well-defined operator $\opTestToFem_j^n$. 

The $W^{1,\infty}$-norm of $\opTestToFem_j^n\tfct$ can be estimated as follows. 
    \begin{align}
      \label{eq:norm-Pphi-estimate}
      \begin{split}
        \|\opTestToFem_j^n\tfct\|_{W^{1,\infty}(K_j^n)} =& \max\left\{\| \opTestToFem_j^n\tfct \|_{L^\infty(K_j^n)}, \| \diff(\opTestToFem_j^n\tfct) \|_{L^\infty(K_j^n)}\right\}\\
        \stackrel{~\eqref{eq:hat-fcts-subspace-t}}{=}&\left\{ 
                                                  \max\{ (\opTestToFem_j^n\tfct)(\nu) \,\colon\, \nu \in \nodeSet(K_j^n) \}
                                                  ,\,
                                                  \max\left\{
                                                  \frac{|\alpha_2|}{\Delta t^{n+\frac12}},\, \frac{|\alpha_3|}{\Delta x_j}
                                                  \right\}
                                                  \right\}.
      \end{split}
    \end{align}
    We first estimate $ \| \opTestToFem_j^n\tfct \|_{L^\infty(K_j^n)}$:
    \begin{multline}
        \max\{ (\opTestToFem_j^n\tfct)(\nu) \,\colon\, \nu \in \nodeSet(K_j^n) \} \\= \max\left\{|\alpha_1|,\,
                                                                              |\alpha_1 + \alpha_2|,
                                                                              |\alpha_1 + \alpha_3|,
                                                                              |\alpha_1 + \alpha_2 + \alpha_3| \right\}
         \stackrel{~(\ref{eq:system-P-coeff})}{\leq} 3\|\tfct \|_{W^{1,\infty}}.
          \label{eq:max_pf_estimate_val}
    \end{multline}
   Next, we estimate $ \| \diff(\opTestToFem_j^n\tfct) \|_{L^\infty(K_j^n)}$. The partial derivative in $x$-direction 
  satisfies
   %
    \begin{align*}
      |\alpha_3| = \left| \av{\vec{\tfct}}_3 - \av{\vec{\tfct}}_1  \right| &= \frac{1}{\Delta t^{n+\frac12}} \left| \int_{t^n}^{t^{n+1}} \tfct(t, x_{j+\frac12}) - \tfct(t, x_{j-\frac12})\,dt
      \right|
       \leq \Delta x_j\|\tfct\|_{W^{1,\infty}}.
    \end{align*}
  The partial derivative in  $t$-direction is estimated as follows.
  There exist $\xi_1 \in S_{j-\frac12}$, $\xi_2 \in S^{n+1}$ and $\xi_3 \in S_{j+\frac12}$, depicted in Figure~\ref{fig:space-time-elm}, such that
    \begin{align}
      |\alpha_2| =  \left| \av{\vec{\tfct}}_1 - 2\av{\vec{\tfct}}_2 + \av{\vec{\tfct}}_3  \right|  = \left| \tfct(\xi_1) - 2\tfct(\xi_2) + \tfct(\xi_3) \right|.
    \end{align}
    Since, $\|\tfct\|_{W^{1.\infty}}$ is the lipschitz constant of $\tfct$ 
    \begin{multline}
        \left| \tfct(\xi_1) - \tfct(\xi_2) - (\tfct(\xi_3) - \tfct(\xi_2)) \right| 
      \leq   \|\tfct\|_{W^{1,\infty}} ( \left| \xi_1 - \xi_2  \right| +\left| \xi_3 - \xi_2  \right|)\\
        \leq 2\sqrt{2}\|\tfct\|_{W^{1,\infty}}\left| \left(\Delta t^{n+\frac12}, \, \Delta x_j\right)^T \right|.
      \end{multline}
    Thus, using the relation $\Delta t^{n+\frac12} = c_j^n\Delta x_j$, we have $|\alpha_2| \leq \sqrt{8+8(c_j^n)^2}\Delta t^{n+\frac12}\|\tfct\|_{W^{1,\infty}}$.

    Summarizing the above results, we obtain
    \[
      \|\opTestToFem_j^n\tfct\|_{W^{1,\infty}(K_j^n)} \leq \max\left\{3,\, \sqrt{8+8(c_j^n)^2}\right\}\|\tfct\|_{W^{1,\infty}(K_j^n)}
    \]
    and remark that the estimate \emph{does not depend on the grid size if the CFL ratio $\frac{\Delta t^{n+\frac12}}{\Delta x_j}$ is uniformly bounded  in each timestep}.
  \end{proof}
 Let us recall that $\opTestToFem$ preserves averages, i.e.
    \begin{equation}
      {\left(\av{\opTestToFem_j^{n}\tfct}\right)}_{j\pm\frac12} = \avPhi_{j\pm\frac12} \text{ and }{\left(\av{\opTestToFem_j^{n,\pm}\tfct}\right)}^{n+1} = \avPhi^{n+1}
      \label{eq:proof-bjn-under-p}    
    \end{equation}
    and, thus, $\opB^{n}_j$ is invariant when $\opTestToFem^{n}_j$ is applied to the test function.
  \begin{proposition}\label{prop:B-invariant-under-P}
    Let $\num{\vec{u}} \in {\pwConst(\stDom)}^m$ be a piecewise-constant function   and $\tfct \in W^{1,\infty}(K^n_j)$ on a space-time cell $K^n_j \subset \stDom$. Then
    \begin{equation}
      \label{eq:bjn-invariant-under-P}
      \opB_j^n[\num{\vec{u}}]( \tfct) = \opB_j^n[\num{\vec{u}}] (\opTestToFem_j^n\tfct).
    \end{equation}
  \end{proposition}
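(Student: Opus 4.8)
The plan is to work with the simplified representation of $\opB_j^n$ given in \eqref{eq:b-op-simplified} rather than with the original Definition~\ref{def:1}. The key structural observation is that the right-hand side of \eqref{eq:b-op-simplified} depends on the test function only through the three edge averages $\avPhi_{j-\frac12}$, $\avPhi^{n+1}$ and $\avPhi_{j+\frac12}$, i.e.\ through the averages over precisely the edges collected in $\edgess{K_j^n}$. All coefficients multiplying these averages are assembled from $\num{\vec{u}}$, the numerical fluxes and the grid sizes, and carry no further dependence on $\tfct$.

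First I would record this dependence as a factorization. Writing $\vec{\avPhi}(\tfct) := (\avPhi_{j-\frac12}, \avPhi^{n+1}, \avPhi_{j+\frac12})^T$ for the vector of the three relevant edge averages of $\tfct$, equation \eqref{eq:b-op-simplified} exhibits a fixed linear map $L\colon \R^3 \to \R^m$, independent of $\tfct$, such that $\opB_j^n[\num{\vec{u}}](\tfct) = L\bigl(\vec{\avPhi}(\tfct)\bigr)$. The traces defining these averages are meaningful because $\tfct \in W^{1,\infty}(K_j^n)$ and $K_j^n$ has Lipschitz boundary, exactly as already used in the proof of Proposition~\ref{prop:Pjn-estimate}.

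Next I would invoke the defining property of $\opTestToFem_j^n$ from \eqref{eq:p-def}, restated in \eqref{eq:proof-bjn-under-p}: since $\fint_S \opTestToFem_j^n\tfct\, dS = \fint_S \tfct\, dS$ for every $S \in \edgess{K_j^n}$, the three relevant edge averages are unchanged by the projection, so $\vec{\avPhi}(\opTestToFem_j^n\tfct) = \vec{\avPhi}(\tfct)$. Substituting into the factorization then yields $\opB_j^n[\num{\vec{u}}](\opTestToFem_j^n\tfct) = L\bigl(\vec{\avPhi}(\opTestToFem_j^n\tfct)\bigr) = L\bigl(\vec{\avPhi}(\tfct)\bigr) = \opB_j^n[\num{\vec{u}}](\tfct)$, which is exactly \eqref{eq:bjn-invariant-under-P}.

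I do not expect any genuine obstacle: the entire content of the statement is the recognition that $\opB_j^n$ factors through the map extracting edge averages on $\edgess{K_j^n}$, while $\opTestToFem_j^n$ is constructed precisely to preserve exactly those averages. The only point deserving a word of care is the well-definedness of the edge traces and hence of the averages, which is inherited from the Lipschitz regularity already exploited for Proposition~\ref{prop:Pjn-estimate}.
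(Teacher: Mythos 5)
Your proposal is correct and follows exactly the paper's own argument: the text preceding the proposition notes via \eqref{eq:proof-bjn-under-p} that $\opTestToFem_j^n$ preserves the three edge averages entering \eqref{eq:b-op-simplified} (which the paper has adopted as the definition of $\opB_j^n$), and concludes the invariance immediately, just as you do. Your explicit factorization through a linear map $L\colon\R^3\to\R^m$ and the remark on trace well-definedness merely spell out details the paper leaves implicit.
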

  Proposition \ref{prop:B-invariant-under-P} implies that for any $\num{\vec{u}} \in {\pwConst(\stDom)}^m$ the operator norm of $\opB_j^n[\num{\vec{u}}]$  on the set of all test functions   can be estimated by the operator norm of $\left. \opB_j^n[\num{\vec{u}}]\right|_\basisSub$  in the following manner.
  \begin{proposition}\label{prop:bjn-estimate}
    For any  piecewise-constant function $\num{\vec{u}}\in {\pwConst(\stDom)}^m$ 
    \begin{equation}
      \label{eq:estimate-B}
      \|\opB_j^n[\num{\vec{u}} ]\|_{\mathcal{L}(W^{1,\infty}(K^j_n), \R^m)}
      \leq
      \max\left\{3,\, \sqrt{8+8(c_j^n)^2}\right\}\|\opB_j^n[\num{\vec{u}}]\|_{\mathcal{L}(\basisSub(K^j_n), \R^m)},
    \end{equation}
    with $c_j^n$ as in Proposition \ref{prop:Pjn-estimate}.
  \end{proposition}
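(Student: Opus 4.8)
The plan is to combine the two preceding propositions through a short chain of inequalities, exploiting the fact that $\opB_j^n[\num{\vec{u}}]$ cannot distinguish a test function from its projection. First I would unfold the definition of the operator norm on the full test space,
\[
\|\opB_j^n[\num{\vec{u}}]\|_{\mathcal{L}(W^{1,\infty}(K_j^n),\R^m)} = \sup_{\tfct \in W^{1,\infty}(K_j^n),\,\tfct\neq 0} \frac{\left|\opB_j^n[\num{\vec{u}}](\tfct)\right|}{\|\tfct\|_{W^{1,\infty}(K_j^n)}},
\]
and fix an arbitrary nonzero $\tfct \in W^{1,\infty}(K_j^n)$.

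The heart of the argument is then purely a composition of known estimates. Using Proposition~\ref{prop:B-invariant-under-P} I would replace $\tfct$ by its projection without altering the residual, that is $\opB_j^n[\num{\vec{u}}](\tfct) = \opB_j^n[\num{\vec{u}}](\opTestToFem_j^n\tfct)$. Since $\opTestToFem_j^n\tfct$ lies in the finite-dimensional subspace $\basisSub(K_j^n)$ by its very construction in \eqref{eq:p-def}, the quantity $|\opB_j^n[\num{\vec{u}}](\opTestToFem_j^n\tfct)|$ is bounded by $\|\opB_j^n[\num{\vec{u}}]\|_{\mathcal{L}(\basisSub(K_j^n),\R^m)}\,\|\opTestToFem_j^n\tfct\|_{W^{1,\infty}(K_j^n)}$. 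Finally Proposition~\ref{prop:Pjn-estimate} controls the norm of the projected function by $\max\{3,\sqrt{8+8(c_j^n)^2}\}\,\|\tfct\|_{W^{1,\infty}(K_j^n)}$. Chaining these three steps yields
\[
\left|\opB_j^n[\num{\vec{u}}](\tfct)\right| \leq \max\left\{3,\,\sqrt{8+8(c_j^n)^2}\right\}\,\|\opB_j^n[\num{\vec{u}}]\|_{\mathcal{L}(\basisSub(K_j^n),\R^m)}\,\|\tfct\|_{W^{1,\infty}(K_j^n)},
\]
and dividing by $\|\tfct\|_{W^{1,\infty}(K_j^n)}$ and taking the supremum over all nonzero $\tfct$ gives the asserted bound.

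Because every step merely invokes a result already established in the excerpt, I do not anticipate a genuine obstacle; the estimate is essentially a bookkeeping exercise. The one point deserving explicit mention is the admissibility of $\opTestToFem_j^n\tfct$ as an argument of the restricted operator, i.e.\ that it indeed belongs to $\basisSub(K_j^n)$, which is immediate from the definition of $\opTestToFem_j^n$. It is also worth noting that $\opB_j^n[\num{\vec{u}}]$ is bounded on $W^{1,\infty}(K_j^n)$ (its value depends linearly and continuously on the three edge averages of $\tfct$, as seen from \eqref{eq:b-op-simplified}), so the supremum manipulation is legitimate and all quantities involved are finite.
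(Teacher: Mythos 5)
Your proposal is correct and follows essentially the same route as the paper's proof: both exploit the invariance $\opB_j^n[\num{\vec{u}}](\tfct) = \opB_j^n[\num{\vec{u}}](\opTestToFem_j^n\tfct)$ from Proposition~\ref{prop:B-invariant-under-P} and the stability bound \eqref{eq:P-is-cont} from Proposition~\ref{prop:Pjn-estimate}, chained through the operator norm on $\basisSub(K_j^n)$. Your variant of inserting the restricted operator norm directly (rather than the paper's rewriting of the supremum with $\|\opTestToFem_j^n\tfct\|$ in the denominator) is a cosmetic difference only, and in fact sidesteps the degenerate case $\opTestToFem_j^n\tfct = 0$ without comment.
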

  \begin{proof}
    Using the definition of the norm of a linear operator and applying previous results yields:
    \begin{align}
      \begin{split}
      \label{eq:estimate-B-proof-1}
      \|\opB_j^n[\num{\vec{u}}]\|_{\mathcal{L}(W^{1,\infty}(K^j_n), \R^m)}
      &= \sup_{\|\tfct\|_{W^{1,\infty}\left(K^{n}_j\right)} \neq 0} \frac{\left| \opB_j^n[\num{\vec{u}}]( \tfct) \right|}{\| \tfct \|_{W^{1,\infty}\left(K^{n}_j\right)}}\\
      &\stackrel{\eqref{eq:bjn-invariant-under-P}}{=}
        \sup_{\|\tfct\|_{W^{1,\infty}\left(K^{n}_j\right)} \neq 0} \frac{\left| \opB_j^n[\num{\vec{u}}] (\opTestToFem_j^n\tfct) \right|}{\| \tfct \|_{W^{1,\infty}\left(K^{n}_j\right)}}\\
      &\stackrel{\eqref{eq:P-is-cont}}{\leq}
        \max\left\{3,\, \sqrt{8+8(c_j^n)^2}\right\}\sup_{\|\tfct\|_{W^{1,\infty}\left(K^{n}_j\right)} \neq 0}
        \frac{\left| \opB_j^n[\num{\vec{u}}] (\opTestToFem_j^n\tfct)  \right|}
        {\| \opTestToFem_j^n\tfct \|_{\basisSub\left(K^{n}_j\right)}}\\
      &=
        \max\left\{3,\, \sqrt{8+8(c_j^n)^2}\right\}\sup_{\ttfct \in \basisSub(K^{n}_j),\, \|\ttfct\|_{W^{1,\infty}\left(K^{n}_j\right)} = 1} \left| \opB_j^n[\num{\vec{u}}]( \ttfct)  \right|\\
        &=    \max\left\{3,\, \sqrt{8+8(c_j^n)^2}\right\}\|\opB_j^n[\num{\vec{u}} ]\|_{\mathcal{L}(\basisSub(K^j_n), \R^m)}.
      \end{split}
    \end{align}
    \
  \end{proof}

  In the next step, we  show that the weak residual on $[t^n, t^{n+1}]\times\dom$, can be estimated by the sum of the local residua. To this end, we define
  \begin{equation}
    \opB^n[\num{\vec{u}}]\,\colon\, W^{1,\infty}([t^n, t^{n+1}]\times\dom) \to \R^m,\quad
    \opB^n[\num{\vec{u}}] (\tfct) \!=\! \sum_{j=0}^{J} \opB_j^n[\num{\vec{u}}] \left(\left.\tfct\right|_{K^n_j}\right) 
        \label{eq:def-global-b},
  \end{equation}
  where the numerical fluxes on the boundary of $\dom$ are obtained by means of some constant (in time and space) outer states $\num{\vec{u}}_{-1}$ and $\num{\vec{u}}_{J + 1}$.  The local spaces of  affine linear functions are gathered to define the space of piecewise linear functions $\basisSub([t^n, t^{n+1}]\times\dom) :=  \bigoplus_{j} \basisSub(K^{n}_j)$ with the corresponding norm: $v\in\basisSub([t^n, t^{n+1}]\times\dom),\, \|v\|_{\basisSub}:=\max_{j}\left\{ \left\|v\big|_{K_j^n}\right\|_{W^{1,\infty}(K_j^n)} \right\}$.

  Recall that the definition of $\opB_j^n$ in (\ref{eq:b-op-simplified}) corresponds to the weak residual, i.e.~the left-hand side of the inequality~(\ref{eq:P_eps}), on $K_j^n$ except that we added numerical fluxes. It is a tedious but straight-forward calculation to show that the numerical fluxes cancel upon summation and  $\opB^n$ coincides with the weak residual.

     As a consequence, the operator norm of $\opB^n[\num{\vec{u}}]$ can be bounded by summing up the local contributions of the operator norms of $\opB_j^n[\num{\vec{u}}]$.
  \begin{proposition}\label{prop:global-bjn}
For any  piecewise-constant function $\num{\vec{u}}\in {\pwConst(\stDom)}^m$ 
    \begin{equation}
      \label{eq:estimate-B-global}
      \|\opB^n[\num{\vec{u}} ]\|_{\mathcal{L}(W^{1,\infty}([t^n, t^{n+1}]\times\dom), \R^m)}  \leq C^n \sum_{j=0}^{J} \left\|  \opB_j^n[\num{\vec{u}} ]\right\|_{\mathcal{L}(\basisSub(K_j^n) \R^m)}
    \end{equation}
    holds, where $C^n:=\max\left\{3,\, \max_{j=1,\ldots, J}\sqrt{8+8(c_j^n)^2}\right\}$.
  \end{proposition}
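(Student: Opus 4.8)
The plan is to reduce the global operator norm to the sum of local ones directly from the defining identity. Starting from \eqref{eq:def-global-b}, namely $\opB^n[\num{\vec{u}}](\tfct) = \sum_{j=0}^{J} \opB_j^n[\num{\vec{u}}](\tfct|_{K_j^n})$, I would fix an arbitrary test function $\tfct \in W^{1,\infty}([t^n, t^{n+1}]\times\dom)$ with $\|\tfct\|_{W^{1,\infty}([t^n, t^{n+1}]\times\dom)} \neq 0$ and apply the triangle inequality in $\R^m$ to obtain
\[
\left|\opB^n[\num{\vec{u}}](\tfct)\right| \leq \sum_{j=0}^{J} \left|\opB_j^n[\num{\vec{u}}](\tfct|_{K_j^n})\right|.
\]

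Next, each summand is estimated by the corresponding local operator norm, $\left|\opB_j^n[\num{\vec{u}}](\tfct|_{K_j^n})\right| \leq \|\opB_j^n[\num{\vec{u}}]\|_{\mathcal{L}(W^{1,\infty}(K_j^n), \R^m)}\,\|\tfct|_{K_j^n}\|_{W^{1,\infty}(K_j^n)}$. I would then invoke Proposition~\ref{prop:bjn-estimate} to pass from the $W^{1,\infty}$ local operator norm to the $\basisSub$ local operator norm at the price of the factor $\max\{3,\,\sqrt{8+8(c_j^n)^2}\}$, and bound this factor uniformly in $j$ by $C^n = \max\{3,\, \max_{j=1,\ldots,J}\sqrt{8+8(c_j^n)^2}\}$.

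The one point that actually makes the argument close is the monotonicity of the $W^{1,\infty}$ norm under restriction: since $K_j^n \subset [t^n, t^{n+1}]\times\dom$ and the $W^{1,\infty}$ norm is built from essential suprema of the function and its gradient, one has $\|\tfct|_{K_j^n}\|_{W^{1,\infty}(K_j^n)} \leq \|\tfct\|_{W^{1,\infty}([t^n, t^{n+1}]\times\dom)}$ for every $j$. Substituting this bound, pulling $C^n$ and the global norm out of the sum, dividing by $\|\tfct\|_{W^{1,\infty}([t^n, t^{n+1}]\times\dom)}$, and taking the supremum over admissible $\tfct$ yields \eqref{eq:estimate-B-global}.

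No step here is genuinely hard; the estimate is a routine consequence of the triangle inequality, the cellwise bound of Proposition~\ref{prop:bjn-estimate}, and the restriction monotonicity of the norm. The only thing to be mindful of is that the global $W^{1,\infty}$ norm controls every local norm \emph{simultaneously} (it is a supremum over the strip, not a sum over cells), which is precisely why the same test function may be compared against each $K_j^n$ without picking up an extra factor depending on the number of cells $J$; had the ambient norm been additive, as in an $L^1$-type setting, the right-hand side would have to be interpreted differently.
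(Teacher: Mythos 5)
Your proposal is correct and follows essentially the same route as the paper: decompose $\opB^n$ via \eqref{eq:def-global-b}, apply the triangle inequality, and invoke the cellwise bound of Proposition~\ref{prop:bjn-estimate} (which already packages the projection invariance \eqref{eq:bjn-invariant-under-P} and the continuity estimate \eqref{eq:P-is-cont}), uniformizing the constant to $C^n$. The only difference is cosmetic: you make explicit the restriction monotonicity $\|\tfct|_{K_j^n}\|_{W^{1,\infty}(K_j^n)} \leq \|\tfct\|_{W^{1,\infty}([t^n,t^{n+1}]\times\dom)}$, which the paper's chain of suprema uses implicitly.
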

  \begin{proof}
    \begin{align*}
      \begin{split}
      \|\opB^n[\num{\vec{u}}] \|_{\mathcal{L}(W^{1,\infty}([t^n, t^{n+1}]\times\dom), \R^m)}
      &= \sup_{\|\tfct\|_{W^{1,\infty}\left([t^n, t^{n+1}]\times\dom\right)} =1} \left| \opB^n[\num{\vec{u}}](\tfct) \right|\\
      &= \sup_{\|\tfct\|_{W^{1,\infty}\left([t^n, t^{n+1}]\times\dom\right)} =1} \left|\sum_{j=0}^{J} \opB_j^n[\num{\vec{u}}]\left( \left.\tfct\right|_{K_j^n}\right)  \right|\\
      &\stackrel{\Delta\text{-ineq.},~\eqref{eq:bjn-invariant-under-P}}{\leq}
        \!\!\!\!\!\!\!\! \sup_{\|\tfct\|_{W^{1,\infty}\left([t^n, t^{n+1}]\times\dom\right)} =1}\sum_{j=0}^{J} \left|\opB_j^n[\num{\vec{u}}] \left(\left.\opTestToFem_j^n\tfct\right|_{K_j^n}\right)  \right|\\
      & \stackrel{~(\ref{eq:estimate-B})}{\leq}  C\sum_{j=0}^{J} \sup_{\|\ttfct\|_{\basisSub\left(K_j^n\right)} =1} \left|  \opB_j^n[\num{\vec{u}}] (\ttfct) \right|.
      \end{split}
    \end{align*}
    \
  \end{proof}

Since the space $\basisSub([t^n, t^{n+1}]\times\dom)$ is finite-dimensional, the bound (\ref{eq:estimate-B-global}),  together with the total variation of $\num{\vec{u}}$, allows to compute  $\varepsilon$ satisfying the weak residual inequality (\ref{eq:P_eps}). In the following we proceed with localizing the weak entropy dissipation residual (\ref{eq:P_eps_entropy}) similarly. To this end, we first rewrite the left-hand side of $(\ref{eq:P_eps_entropy})$ as already done for the weak residual, cf.~equation~(\ref{eq:b-op-simplified}).
\begin{definition}
Let $\tttfct \in W^{1,\infty}(K_j^n)$ be a non-negative test function,  $ \num{\entropyFlux}$ a numerical entropy flux and let $\num{\vec{u}}\in \pwConst(\stDom)^m$ be a piecewise-constant function. We define the linear local weak entropy dissipation operator as
  \begin{align}
    \label{eq:e-op-def}
    \begin{split}
      &\opE_j^n[\num{\vec{u}}] \, \colon\,  W^{1,\infty}(K_j^n) \to \R,\\
      & \tttfct \mapsto  \Delta x_{j}\left(\entropy\left({\num{\vec{u}}}\,^n_{j}\right) - \entropy\left({\num{\vec{u}}}\,^{n+1}_{j}\right)\right){\avPsi}\,^{n+1}\\
      &\quad\quad\quad\quad+\Delta t^{n+\frac12}\left(\num{\entropyFlux}(\num{\vec{u}}_{j-1}^n, \num{\vec{u}}_{j}^n) \avPsi_{j-\frac12} - \num{\entropyFlux}(\num{\vec{u}}_{j}^n, \num{\vec{u}}_{j+1}^n) \avPsi_{j+\frac12} \right)\\
      &\quad\quad\quad\quad+ \Delta t^{n+\frac12}\entropyFlux\left({\num{\vec{u}}}\,^n_{j}\right)\left( \avPsi_{j+\frac12} - \avPsi_{j-\frac12}  \right),
    \end{split}
  \end{align}
  and the global weak entropy dissipation as $\opE^n[\num{\vec{u}}]\,\colon\, W^{1,\infty}([t^n, t^{n+1}]\times\dom) \to \R$ in the same way as the weak residual on $\stDom$ is defined in  equation~(\ref{eq:def-global-b}).
\end{definition}
 Similarly to Proposition~\ref{prop:B-invariant-under-P} we obtain that
\begin{equation}
  \label{eq:E-invariant-under-P}
  \opE_j^n[\num{\vec{u}}] (\tttfct) = \opE_j^n[\num{\vec{u}}] (\opTestToFem_j^n\tttfct).
\end{equation}
For ease of notation we define the cone of  affine linear functions with non-negative edge averages as $\basisSubPlus(K_j^n) := \{ v \in\basisSub(K_j^n) \,\colon\, \fint_S{v}\, dS \geq 0 \quad \forall  S \in \edges{K_j^n} \}$. In the following we only need to consider $\ttfct \in \basisSubPlus(K_j^n)$ such that  $\opE_j^n[\num{\vec{u}}]( \ttfct) < 0$. We obtain the following bounds, with the same notation as in Propositions~\ref{prop:bjn-estimate} and~\ref{prop:global-bjn},  applying the same technique we have used 
for the weak residual: 
\begin{equation}
  \label{eq:entropy-estimate-loc}
  \inf_{ \tttfct \in W^{1,\infty}(K_j^n),\, \tttfct \geq 0}\frac{\opE_j^n[\num{\vec{u}}]( \tttfct) }{\|\tttfct\|_{W^{1,\infty}(K_j^n)}} \geq 
  C^n \inf_{ \ttfct \in \basisSubPlus(K_j^n)}\frac{\opE_j^n[\num{\vec{u}}] (\ttfct)}{\|\ttfct\|_{\basisSub(K_j^n)}},
\end{equation}
with $C^n$ as in Proposition \ref{prop:global-bjn}, and its global version
    \begin{equation}
      \label{eq:entropy-estimate-glob}
      \inf_{ \|\tttfct\|_{W^{1,\infty}([t^n, t^{n+1}]\times\dom)}=1,\, \tttfct \geq 0}\opE^n[\num{\vec{u}}] (\tttfct) \geq C^n\sum_{j=0}^{J} \inf_{\ttfct\in \basisSubPlus(K_j^n),\, \|\ttfct\|_{W^{1,\infty}(K_j^n)}=1}\opE_j^n[\num{\vec{u}}] (\ttfct) .
    \end{equation}

Next, we present an example in order gain intuition for the scaling of the weak residuals:
\begin{remark}[Local scaling of the weak residual]
  \label{rem:scalings-res}
  Firstly, we consider a specific stationary approximation of a stationary shock in the (scalar) Burgers equations, i.e., $f(u) = \frac12 u^2$,
  \[
    \num{u}_j^n :=
    \begin{cases}
      1 &,\quad j< 0,\\
      0 &,\quad j= 0,\\
      -1 &,\quad j >0.
    \end{cases}   
  \]
This numerical solution is stationary if the Engquist-Osher flux or the Godunov flux is used.
   If the numerical flux from the scheme is also used in the definition of the weak residual the latter satisfies:
  \[
    \left\|\opB_j^n[\num{u}] \right\|_{\mathcal{L}(\basisSub(K_j^n), \R)} =
    \begin{cases}
      0 &,\quad j< 0,\\
      \Delta t^{n+\frac12} \Delta x_0 &,\quad j= 0,\\
      0 &,\quad j >0.
    \end{cases}
  \]
  The residual is concentrated in a single cell and the residual on a time-slab (\ref{eq:estimate-B-global}) satisfies $\|\opB^n[\num{u}]\|_{\mathcal{L}(W^{1,\infty}([t^n, t^{n+1}]\times\dom),\, \R^m)} = O(\Delta x_0 \cdot \Delta t^{n+\frac12})$.
   This, in turn, implies that the so-obtained $\varepsilon$ in the inequality~(\ref{eq:P_eps}), scales as $\varepsilon = O(\Delta x_0)$.
  This seems reasonable, since, arguably, one cannot expect the residual to converge to zero faster than the approximation error.

  It is worthwhile to note that the numerical approximation 
    \[
    \num{u}_j^n :=
    \begin{cases}
      1 &,\quad j\leq 0,\\
      -1 &,\quad j >0,
    \end{cases}   
  \]
  which is also stationary for the Godunov flux and the Engquist-Osher flux, satisfies  $\|\opB^n[\num{u}]\|_{\mathcal{L}(W^{1,\infty}([t^n, t^{n+1}]\times\dom),\, \R^m)} =0$ for all $n$.

  Secondly,  for any numerical solution that is obtained by a Lipschitz continuous and consistent numerical flux  with mesh size $\Delta x := \max \{\Delta x_{j-1}, \Delta x_{j}, \Delta x_{j+1}\}$ one can show that
  \begin{equation}\label{eq:locres}
    \left\|\opB_j^n[\num{u}] \right\|_{\mathcal{L}(\basisSub(K_j^n), \R)}
    \leq
    L \Delta t^{n+\frac12}\left(\Delta t^{n+\frac12} + \Delta x \right) \left( \left|  \num{u}^n_{j-1} - \num{u}^n_{j} \right| 
    + \left|  \num{u}^n_{j} - \num{u}^n_{j+1} \right|  \right)  ,
  \end{equation}
where $L$ is the Lipschitz-constant of the numerical flux so that
\[
\sum_j \left\|\opB_j^n[\num{u}] \right\|_{\mathcal{L}(\basisSub(K_j^n), \R)}
\leq
2L\Delta t^{n+\frac12}\left(\Delta t^{n+\frac12} + \Delta x \right)
TV[\num{u}].
\]
 In this case we obtain that $\varepsilon$ scales as $\varepsilon = O(\Delta x)$, i.e.~with order one in mesh size, same as the numerical scheme. It can be seen from \eqref{eq:locres} that weak-residuals are concentrated in cells adjacent to significant jumps in $\num{u}$.
\end{remark}

Now, we can compute a bound for $\varepsilon$ satisfying both inequalities in $[0, t]\times \dom$ where $t \in (0,T]$ as follows. First, taking the $\ell^\infty \ell^1$-norm of $(\|\opB_j^n[\num{\vec{u}}]\|)_{(j,n)} $ over all cells $K_j^n$ such that $K_j^n \cap ([0,t]\times\dom) \neq \emptyset$ and $(\{t\} \times \dom) \cap \overline{K_j^{N'}} \neq \emptyset$ (i.e.~$N'$ is the largest time-layer of cells in $[0,t]\times\dom$), yields estimates of the weak residual and the entropy dissipation residual in $[0, t]\times \dom$ denoted by
\begin{align}
    &\locBscalar := \max_{n\in\{1,\ldots, N'\}} \frac{1}{\Delta t^{n+\frac12}}\sum_{j=0}^{J} \| \opB_j^n[\num{\vec{u}}] \|_{\mathcal{L}( \basisSub(K_j^n), \R^m)},  \label{eq:beta-eta-1} \\
    &\locEscalar := \max_{n\in\{1,\ldots, N'\}}\frac{1}{\Delta t^{n+\frac12}}\sum_{j=0}^{J} \left| \inf_{\ttfct\in \basisSubPlus(K_j^n),\, \|\ttfct\|_{W^{1,\infty}}=1}\opE_j^n[\num{\vec{u}}] (\ttfct) \right| \label{eq:beta-eta-2}
\end{align} 
where we use the $\ell^{\infty}$-norm on $\R^m$. 
Then, we can determine $\varepsilon$ satisfying 
(\ref{eq:P_eps}) and~(\ref{eq:P_eps_entropy}) as follows
\begin{equation}
  \label{eq:proof-eps-def}
  \varepsilon :=
  \begin{cases}
    0 & \text{if } \num{\vec{u}} \equiv const, \\
    \dfrac{ C\max\left\{ \locBscalar, \locEscalar  \right\} }
      {\sup_{s \in [0,t]}TV[\num{\vec{u}}(s,\,\cdot\,)]} & \text{otherwise},
  \end{cases}
\end{equation}
where $C=\max\left\{3,\, \max_{j\in\{0,\ldots,J\}, n\in\{0,\ldots, N'\}}\sqrt{8+8(c_j^n)^2}\right\}$.

Note that the total variation in (\ref{eq:proof-eps-def}) is zero whenever $\num{\vec{u}}(t,\,\cdot\,) \equiv const$. However, in this case both inequalities (\ref{eq:P_eps}) and~(\ref{eq:P_eps_entropy}) are satisfied trivially, since the weak residual $\locBscalar$ is zero as well and the entropy is constant.


 \subsection{An algorithm for computing bounds on local weak residuals}
  \label{sec:algorithm}
In this section, we present an algorithm for the computation of the previously presented upper bound~
\eqref{eq:estimate-B-global} for  the weak residual and the weak entropy dissipation residual.

  We observe that if the flux used in the definition of $\opB_j^n$ is the same as the one used  in the finite volume scheme, (\ref{eq:finite-volumes}), then    $\opB_j^n[\num{\vec{u}}]((t,x) \mapsto  c) = 0$ for any constant $c\in\R$. Thus, in order to estimate the operator norm $\|\opB_j^n[\num{\vec{u}}]\|_{\mathcal{L}(\basisSub(K^j_n), \R^m)}$ we normalize test functions $\ttfct\in\basisSub(K_j^n)$,
  defined in (\ref{eq:hat-fcts-subspace-t}) as $\ttfct(t,x)=\alpha_1 + \frac{(t^{n+1} - t )}{\Delta t^{n+\frac12}}\alpha_2 + \frac{( x_{j+\frac12} - x)}{\Delta x_{j}}\alpha_3$, by setting  $\alpha_1 = 0$. Note that $\|\diff\ttfct\|_{L^{\infty}}\leq 1$ implies $|\alpha_2| \leq \Delta t^{n+\frac12}$ and $|\alpha_3| \leq \Delta x_j$. Since $\opB_j^n[\num{\vec{u}}]$ is linear,  its norm can be estimated as follows
  \begin{align}
    \label{eq:Bjn-estimate-linear}
    \begin{split}
    \|\opB_j^n[\num{\vec{u}}]\|_{\mathcal{L}(\basisSub(K^j_n), \R^m)} &= \sup_{\|\ttfct \|_{W^{1,\infty}} = 1 } \left|\opB_j^n[\num{\vec{u}}]( \ttfct)  \right| \\
                                                          &\stackrel{\|\diff(\ttfct)\|_{L^\infty} \leq 1}{\leq} \left|\opB_j^n[\num{\vec{u}}]((t,x) \mapsto t)\right|  + \left|\opB_j^n[\num{\vec{u}}]((t,x) \mapsto x)\right|\\
    &  \leq  
   \frac12 \big( \Delta t^{n+\frac12}\big)^2\left|\numflux\left(\num{\vec{u}}^n_{j-1}, \num{\vec{u}}^n_{j}\right)  - \numflux\left(\num{\vec{u}}^n_{j}, \num{\vec{u}}^n_{j+1}\right)  \right|\\
&+\frac12 \Delta x_{j} \Delta t^{n+\frac12}\left|\numflux\left(\num{\vec{u}}^n_{j-1}, \num{\vec{u}}^n_{j}\right) +  \numflux\left(\num{\vec{u}}^n_{j}, \num{\vec{u}}^n_{j+1}\right) - 
2\vec{f}(\num{\vec{u}}_{j}^n)\right|.                                            
    \end{split}
  \end{align}

  We perform  a similar estimate for the entropy dissipation residual operator $\opE_j^n$  keeping in mind that the test functions are in $\basisSubPlus(K_j^n)$, i.e.~their averages are non-negative. We define
  \begin{align}
    \begin{split}
      \opE_1 &:= \opE_j^n[\num{\vec{u}}] \left((t,x) \mapsto  1\right)\\
      &=
        \Delta x_{j}\left(\entropy\left({\num{\vec{u}}}\,^n_{j}\right) - \entropy\left({\num{\vec{u}}}\,^{n+1}_{j}\right) \right) +
        \Delta t^{n+\frac12}\left(\num{\entropyFlux}(\num{\vec{u}}_{j-1}^n, \num{\vec{u}}_{j}^n) - \num{\entropyFlux}(\num{\vec{u}}_{j}^n, \num{\vec{u}}_{j+1}^n)  \right) ,\\
      \opE_2 &:= \opE_j^n[\num{\vec{u}}] \left((t,x) \mapsto  t^{n+1}-t\right) =   \frac12 \big(\Delta t^{n+\frac12}\big)^2\left(\num{\entropyFlux}(\num{\vec{u}}_{j-1}^n, \num{\vec{u}}_{j}^n) - \num{\entropyFlux}(\num{\vec{u}}_{j}^n, \num{\vec{u}}_{j+1}^n)  \right),\\
      \opE_3 &:= \opE_j^n[\num{\vec{u}}]\left((t,x) \mapsto  x_{j+\frac12} - x\right)\\
      & = \frac12 \big(\Delta x_{j}\big)^2\left(\entropy\left({\num{\vec{u}}}\,^n_{j}\right) - \entropy\left({\num{\vec{u}}}\,^{n+1}_{j}\right) \right) +
        \Delta t^{n+\frac12}\Delta x_{j}\left(\num{\entropyFlux}(\num{\vec{u}}_{j-1}^n, \num{\vec{u}}_{j}^n) - \entropyFlux(\num{\vec{u}}_{j}^n)  \right).
    \end{split}
  \end{align}
  Due to linearity of $\opE_j^n[\num{\vec{u}}]$ we obtain 
  \begin{align}
    \label{eq:Ejn-norm-linear}
    \begin{split}
      \inf_{\ttfct \in \basisSubPlus(K_j^n),\, \|\ttfct \|_{W^{1,\infty}} = 1} \opE_j^n[\num{\vec{u}}] &=  \inf_{\ttfct \in \basisSubPlus(K_j^n), \, \|\ttfct \|_{W^{1,\infty}} = 1 }\biggl( \alpha_1\opE_1 + \frac{\alpha_2}{\Delta t^{n+\frac12}}\opE_2  + \frac{\alpha_3}{\Delta x_j}\opE_3\!\!\biggl).\\
      & \geq  \min\{0, \opE_1\}   + \min\{0, \opE_2\} + \min\{0, \opE_3\}.
    \end{split}
  \end{align}
  The last estimate holds due to the fact that  $\|\ttfct\|_{L^\infty} = 1$ implies $|\alpha_1| \leq 1$.
  
  These estimates provide the final ingredients for  Algorithm~\ref{algo:weak-res} that computes $\varepsilon$. 
  \begin{algorithm}[]
    \caption{Weak residual and entropy dissipation estimates}    \label{algo:weak-res}    
    \begin{algorithmic}
      \Function{epsilon}{$\num{\vec{u}}\in {\pwConst(\stDom)}^m$}
        \State $\beta\gets 0$,\, $\eta\gets 0$
        \State $\mathtt{c} \gets  \max_{n=0}^{N } \max_{j=0}^{J} \Delta t^{n+\frac12} / \Delta x_j$, \,\, $\mathtt{C} \gets  \max\{3, \sqrt{8 + 8\mathtt{c}^2}\}$ \Comment{cf. (\ref{eq:estimate-B-global})}

        \ForAll{$n \in 0, \ldots, N-1$} \Comment{cf. (\ref{eq:Bjn-estimate-linear}) and (\ref{eq:Ejn-norm-linear})}
          \State $\displaystyle\quad\beta \gets \max \Big\{ \beta,\,\,  \Big\| \sum_{j =0}^{J} \left|\opB_j^n[\num{\vec{u}}]((t,x)\mapsto t)\right|  + \left|\opB_j^n[\num{\vec{u}}] ((t,x)\mapsto x)\right| \Big\|_{\infty}   \Big\}$
          \State\begin{align*}
                   \eta\gets\max\Big\{\eta,\, \Big\| \sum_{j =0}^{J}  \Big|\min&\{0, \opE_j^n[\num{\vec{u}}] \left((t,x) \mapsto  1\right)\}\\
                   &+ \min\{0, \opE_j^n[\num{\vec{u}}] \left((t,x) \mapsto  t^{n+1}-t\right) \}\\
                   &+\min\{0, \opE_j^n[\num{\vec{u}}]((t,x) \mapsto  x_{j+\frac12} - x) \}\Big| \Big\|_{\infty} \Big\}
                 \end{align*}
        \EndFor{}
        \State\Return{$\mathtt{C}\cdot\max\{\beta, \eta\}\, /\, \max_{n=0}^{N} \textsc{tv}(\num{\vec{u}}(t^n,\, \cdot\, ))$}%
      \EndFunction%
    \end{algorithmic}
  \end{algorithm}
Various quantities such as the averages of $t$ and $x$ can be precomputed. The algorithm requires one swipe through all cells $K_j^n$, i.e. $O(J\cdot N)$ operations.

So far, we have localized the weak residual and the weak entropy dissipation residual, however, these alone do not provide any  error bounds. In the following section we combine our results with  the stability analysis carried out in~\cite{bressanPosterioriErrorEstimates2021a} to obtain the sought.  In addition, we formulate an algorithm for the computation of the estimates of $\opB^n$ and $\opE^n$ and, consequently of the $L^\infty L^1$-error bound of a piecewise-constant  solution.


\section{$L^\infty L^1$ error estimation}
\label{sec:analys-postr-estim}
In the first part of this section, we explain how the  $L^\infty L^1$-error of piecewise-constant approximate solution can be bounded in terms of the computable upper bounds for weak residual and entropy dissipation residual derived in Section~\ref{sec:weak-formulation}. This is based on stability estimates from~\cite{bressanPosterioriErrorEstimates2021a} that depend on certain quantities related to the oscillation of the numerical solution.
We present an algorithm for the computation of these quantities and the overall a-posteriori error estimator in the second part of this section.

\subsection{Computing oscillation bounds}
\label{sec:local-error-estim}
The statements in~\cite[Thm.~3.1 and Thm.~4.1]{bressanPosterioriErrorEstimates2021a} 
provide error estimates for regions where the solution is either smooth or has isolated discontinuities, respectively.
To use Bressan's stability results it is required to compute oscillations in \emph{meso-timeslabs} $[0, \macroT]\times\dom $. We can choose the length of the meso timeslab and arguments from~\cite{bressanPosterioriErrorEstimates2021a}  show that $\macroT \approx \Delta t^{\frac13} $ will lead to good error estimates. In particular, we expect that $\macroT \gg \Delta t $.

As discussed in~\cite{bressanPosterioriErrorEstimates2021a}, our task is to partition the strip $[0,\macroT]\times\dom$ into trapezoids $\trapS_k$, that enclose isolated discontinuities of certain strength and trapezoids  $\trapG_k$, covering the remaining parts of the meso-timeslab, where the solution is mainly ``smooth''. In the following, we distinguish between discontinuities in the numerical solution that are due to the fact that we use the finite-volume approximation but approximate a smooth function and \emph{significant} discontinuities that we believe to approximate discontinuities in the exact solution. 

The idea behind this partitioning is comparing $\num{\vec{u}}$ to solutions of linearized hyperbolic PDEs in ``smooth'' regions whereas in regions near significant discontinuities  $\num{\vec{u}}$ is compared to solutions of Riemann problems. The latter can only be done for sufficiently isolated significant discontinuities. Since the aforesaid isolated significant discontinuities might correspond to different wave types (shocks or contact discontinuities) we  shall refer to them as \textit{surges}. The fact that surges are required to be sufficiently isolated, i.e.~the distance between surges  has to be large enough such that no wave interactions occur in the meso-timeslab  implies that some significant discontinuities of $\num{\vec{u}}$ might end up in the ``smooth'' trapezoids. If this happens it reduces the accuracy of the estimate, however,  this does not affect the scaling of the estimator, see Remark~\ref{rem:error-scalings} for details. 

In order to distinguish surges from ``smooth'' regions, e.g.~piecewise-constant approximations of rarefaction waves,  a lower threshold $\shockThresh>0$ on the surge strength, i.e.~on the maximal absolute difference of the conservative variables, needs to be prescribed.   Following the construction in~\cite[Section 4]{bressanPosterioriErrorEstimates2021a}, a surge is enclosed in a narrow strip $\{ (t,x)\,\colon\, t \in [0,\macroT],\, |x-\gamma(t)| < \delta  \}$ around the approximate surge curve $\gamma(t) = x_0 + \lambda t$ originating at some footpoint $x_0 \in \dom$. We choose the strip-width $\delta$ of order $\delta = O(\varepsilon^{\frac23})$ which will ensure correct asymptotic behavior later on. The narrow strip is contained in a trapezoid of width at least $2\left(\delta + \varepsilon^{\frac13}\right)$.  Suppose we have localized $J_S$ surges in a meso-timeslab (details on this will be given in Algorithm~\ref{algo:surge-detect}). Then we define the trapezoid surrounding the $k$-th surge originating at $x_{0,k}$ as a convex hull
\begin{align}
  \label{eq:shock-trapezoids}
  \begin{split}
    &\trapS_k := \text{conv}\left\{(0, a'_k),\, (0, b'_k),\, (\macroT, a_k),\, (\macroT, b_k)   \right\}, \text{ where: }\\
    &a_k = x_{0,k} + \lambda^-\macroT - \delta_k - \varepsilon^{\frac13},\quad
    b_k = x_{0,k} + \lambda^+\macroT + \delta_k + \varepsilon^{\frac13}\\
    &a'_k = a_k - \lambda^+\macroT,\quad b'_k = b_k - \lambda^-\macroT,
  \end{split}
\end{align}
for $k\in \{0,\ldots, J_S\}$. Let $\lambda^+$ and $\lambda^-$ denote the maximal and minimal  speed of the characteristics  in $[0,\macroT]\times\dom$,  respectively.
The remaining parts of the meso-timeslab, $[0,\macroT]\times\dom\,\, \backslash\,\,  \bigcup_{k=0}^{J_S} \trapS_k$, are covered, in accordance with~\cite[Eqs. (3.1)--(3.5)]{bressanPosterioriErrorEstimates2021a}, by 
\begin{align}
  \label{eq:trapezoids-def}
  \begin{split}
    &\trapG_k := \text{conv}\left\{(0, a'_k),\,(0, b'_k),\,(\macroT,\, a'_k + \macroT\lambda^{+}+\varepsilon^{\frac23}),\, (\macroT,\, b'_k + \macroT\lambda^{-}-\varepsilon^{\frac23})   \right\},
  \end{split}
\end{align}
where $a'_k < a'_{k+1} < b'_k < b'_{k+1}$ for all $k\in\{0,\ldots, J_G\}$, are chosen such that $b'_k - a'_k > 2\macroT(\lambda^+ - \lambda^-)$. The last condition ensures that each point in the meso-timeslab is contained in at most two smooth trapezoids, and, if there are only finitely many surges in the exact solution, the number of surge trapezoids should be bounded uniformly in $\Delta x$ (by the number of surges in the exact solution). The slopes of the trapezoids $\trapG_k$ are given by the maximal and minimal characteristic speeds adjusted by $\varepsilon^{\frac23} /  \tau \approx \varepsilon^{\frac13}$ and we have     $[0,\macroT]\times\dom = \bigcup_{k=0}^{J_S} \trapS_k \,\cup\,   \bigcup_{k=0}^{J_G} \trapG_k $. 

In the trapezoids covering ``smooth'' regions, the supremum of oscillations  is defined as
\begin{equation}
  \label{eq:kappa-def}
  \osc := \max_{k\in \{0,\ldots, J_G\}}  \sup_{(t,x),\,(s,y) \in \trapG_k} \left| \num{\vec{u}}(t,x) - \num{\vec{u}}(s,y) \right|.
\end{equation}

The final ingredient required for the $L^\infty L^1$-error estimators are the oscillations of the solution in the vicinity of a surge. For  surge-trapezoids, $\trapS_k$, it is expected that oscillations outside of the narrow strip of width $\delta_k$ surrounding the $k$-th surge curve are of the same order of magnitude as the oscillations in ``smooth'' regions. Thus, we define the left and right sub-trapezoids of $\trapS_k$ excluding the strip around the $k$-th surge as
\begin{align}
  \label{eq:left-right=doms}
  \begin{split}
    &\trapS_k^{l} := \left\{ (t,x)\in \trapS_k\,\colon\, a'_k + \lambda^+t \leq x \leq x_{0,k} - \delta_k + \lambda t \right\},\\
    &\trapS_k^{r} := \left\{ (t,x)\in \trapS_k\,\colon\, x_{0,k} + \delta_k + \lambda t \leq x \leq b'_k + \lambda^-t  \right\},\\ 
  \end{split}
\end{align}
cf. \cite[Eqs. (4.5)-(4.6)]{bressanPosterioriErrorEstimates2021a}, and the oscillations in $\trapS_k$ as
\begin{equation}
  \label{eq:shock-oscillations}
  \osc_k' := \max\left\{\sup_{(t,x),\,(s,y) \in \trapS_k^{l}} \left| \num{\vec{u}}(t,x) - \num{\vec{u}}(s,y) \right|, \sup_{(t,x),\,(s,y) \in \trapS_k^{r}} \left| \num{\vec{u}}(t,x) - \num{\vec{u}}(s,y) \right|\right\}.
\end{equation}
\begin{remark}
Note that we do not require precise surge curves. Indeed, we exclude strips  of width  $\delta_k = O(\varepsilon^{2/3})$ around approximate surge curves $\gamma_k$ when computing oscillations in surge trapezoids $S_k$. This allows us to handle uncertainty of surge positions due to numerical dissipation as well as the fact that, in general, surge curves do not have constant speed. In the following subsection we present algorithms for surge detection and determination of widths of strips around surges $\delta_k$.
\end{remark}

The above definitions allow us to state the following local in time a-posteriori error estimate, summarizing~\cite[Thm.~3.1]{bressanPosterioriErrorEstimates2021a} and~\cite[Thm.~4.1]{bressanPosterioriErrorEstimates2021a}.
\begin{theorem}\label{thr:time-slab}
  Let $\opSemiG \,\colon\, [0,\infty) \times  \Xi \to \Xi$ define a Lipschitz-semigroup of entropy weak solutions of the hyperbolic conservation law~(\ref{eq:pde}) in the domain of the semigroup $\Xi \subset L^1(\R;\,\R^m)$ of functions with small total variation. Given a threshold on the surge strength, $\shockThresh_0 > 0$ we define
  \begin{equation}
    \label{eq:shock-strength-bound}
    C_0 := \frac{\shockThresh_0}{ \min_{k \in \{0,\ldots, J_S\}}\left( \varepsilon / \delta_k + \delta_k / \varepsilon^{\frac13} + 2\osc'_k \right)^{\frac13}}.
  \end{equation}
   Let the meso-timeslab $[0,\tau] \times \dom $ be partitioned  into trapezoids according to~(\ref{eq:shock-trapezoids}) and (\ref{eq:trapezoids-def}). Then, there exist constants $C_S$ and $C_G$ depending on $C_0$ such that the error at time $\tau$ of the solution is estimated as
  \begin{equation}
    \label{eq:error-estimate-bressan}
    \max_{0\leq t \leq \macroT}\left\|\num{\vec{u}}(t, \,\cdot\,) - \opSemiG_t \vec{u}_0 \right\|_{L^1(\dom )} \leq C_S\estim_S + C_G\estim_G
  \end{equation}
  where
  \begin{equation}
    \label{eq:error-estimate-bressan-shock}
    \estim_S :=  \macroT\sum_{k=0}^{J_S} \left( \frac{\varepsilon}{\delta_k} + \frac{\delta_k}{\varepsilon^{\frac13}}  + 2\osc'_k  \right),\quad
    \estim_G :=  \left( \macroT  + \osc  \right) \sup_{t \in [0,\macroT]}TV[\num{\vec{u}}(t,\,\cdot\,)]\, \varepsilon^{\frac13}.
  \end{equation}
\end{theorem}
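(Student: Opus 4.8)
Since the statement explicitly \emph{summarizes} \cite[Thm.~3.1]{bressanPosterioriErrorEstimates2021a} and \cite[Thm.~4.1]{bressanPosterioriErrorEstimates2021a}, my plan is not to reprove the two local comparison estimates but to reduce the global $L^1$ bound on $\dom$ to a sum of trapezoid-wise contributions and then import the cited results on each trapezoid type. The organizing tool would be the standard error-flow representation for a Lipschitz semigroup: since $\opSemiG$ is $L^1$-Lipschitz on $\Xi$ with some constant $L$, the discrepancy at the top of the meso-timeslab is controlled by the mismatch of the initial traces plus the time-integral of the instantaneous defect,
\[
  \left\| \num{\vec{u}}(\macroT,\cdot) - \opSemiG_\macroT \vec{u}_0 \right\|_{L^1(\dom)}
  \le L \left\| \num{\vec{u}}(0,\cdot) - \vec{u}_0 \right\|_{L^1(\dom)}
  + L \int_0^\macroT \liminf_{h\to 0^+} \frac{1}{h}
  \left\| \num{\vec{u}}(t+h,\cdot) - \opSemiG_h \num{\vec{u}}(t,\cdot) \right\|_{L^1(\dom)} dt .
\]
The initial term is of order $O(\Delta x\, TV)$, hence dominated by the $O(\varepsilon^{1/3})$ scaling of $\estim_G$ and harmless; the whole difficulty lies in estimating the integrand locally in space.

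Next I would use the geometry of the partition to decouple the spatial contributions. Because the lateral edges of both trapezoid families in \eqref{eq:shock-trapezoids}--\eqref{eq:trapezoids-def} have slopes matching or exceeding the extremal characteristic speeds $\lambda^\pm$ (for $\trapG_k$ adjusted by $\varepsilon^{2/3}/\macroT$), finite propagation speed guarantees that the value of an entropy solution at the top of any trapezoid depends only on data inside that trapezoid. This lets me localize the comparison solution to each trapezoid independently and split the integrand into a sum over the footprints of the $\trapG_k$ and $\trapS_k$, while the finite-overlap condition $b'_k - a'_k > 2\macroT(\lambda^+ - \lambda^-)$ ensures that each point lies in at most two smooth trapezoids and that distinct surges do not interact within $[0,\macroT]$.

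Then I would apply the two local estimates. On each smooth trapezoid $\trapG_k$ I would invoke \cite[Thm.~3.1]{bressanPosterioriErrorEstimates2021a}, comparing $\num{\vec{u}}$ to the solution of the frozen-coefficient linearized system; here the weak residual bound \eqref{eq:P_eps} (through $\varepsilon$) together with the oscillation $\osc$ controls the linearization error and reproduces exactly the $\estim_G$ contribution with its $\varepsilon^{1/3}$ scaling. On each surge trapezoid $\trapS_k$ I would invoke \cite[Thm.~4.1]{bressanPosterioriErrorEstimates2021a}, comparing $\num{\vec{u}}$ inside the strip of width $\delta_k$ to the self-similar solution of the Riemann problem determined by the flanking states, with \eqref{eq:P_eps_entropy} selecting the admissible wave and the oscillations $\osc'_k$ on $\trapS_k^{l}, \trapS_k^{r}$ bounding the deviation from a pure Riemann pattern away from the strip; this yields the summand $\varepsilon/\delta_k + \delta_k/\varepsilon^{\frac13} + 2\osc'_k$ of $\estim_S$. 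The role of the threshold $\shockThresh_0$ and of $C_0$ in \eqref{eq:shock-strength-bound} is precisely to certify that each detected surge is strong enough relative to its resolution for the Riemann comparison to be admissible, which is what fixes the dependence of $C_S$ and $C_G$ on $C_0$. Summing the local bounds and using the at-most-twofold overlap collapses the integral into $C_S\estim_S + C_G\estim_G$.

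The main obstacle I expect is the bookkeeping at the seams rather than any single estimate: one must verify that the two different comparison solutions (linearized on $\trapG_k$, Riemann on $\trapS_k$) used on adjacent trapezoids glue together without losing the $L^1$-Lipschitz control across their common boundaries, and that the per-trapezoid constants supplied by \cite{bressanPosterioriErrorEstimates2021a} can be chosen uniformly over all $k$ once $C_0$ is fixed. The genuinely hard analytic content---the two comparison estimates themselves, and in particular the shift-based matching to Riemann solutions underlying \cite[Thm.~4.1]{bressanPosterioriErrorEstimates2021a}---is already established there and is imported here rather than reproved.
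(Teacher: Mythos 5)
Your proposal is correct and takes essentially the same route as the paper, which offers no independent proof of this theorem but states it explicitly as a summary of \cite[Thm.~3.1]{bressanPosterioriErrorEstimates2021a} and \cite[Thm.~4.1]{bressanPosterioriErrorEstimates2021a}: the intended argument is exactly to apply those two local comparison estimates trapezoid-wise (linearization on the $G_k$, Riemann comparison on the $S_k$, with the threshold $\sigma_0$ and the constant $C_0$ certifying that detected surges are strong enough for the latter) and to sum the contributions using the finite propagation speed encoded in the trapezoid slopes and the at-most-twofold overlap of the smooth trapezoids. Your extra scaffolding via the Lipschitz-semigroup error-flow representation is the device Bressan, Chiri and Shen use to concatenate successive meso-timeslabs rather than something needed inside a single slab (where Theorems 3.1 and 4.1 already bound the time-$\tau$ error per trapezoid directly), but this organizational difference does not affect the validity of the argument.
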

This theorem provides a basis for constructing adaptive mesh refinement methods for time-marching schemes and, more importantly, serves as the basis for the  error estimate over the whole space-time domain $\stDom$ presented right after a few remarks. 

\begin{remark}\label{rem:main-thm} \ \newline  \begin{enumerate}
  \item The discussion in~\cite[Rem. 5.1]{bressanPosterioriErrorEstimates2021a} sheds light on the expected asymptotic behavior of the error bound~(\ref{eq:error-estimate-bressan-shock}). The variety of choices in the construction of the stability estimates implies that a rigorous analysis of the asymptotics cannot be performed  easily. However, the same variety allows tuning the estimates in a heuristic manner in hope of achieving sharper bounds. This considerations led to the choices of the parameters that are incorporated in the statement of Theorem~\ref{thr:time-slab} and are expected to provide error bounds  approaching zero with $\varepsilon^{\frac23}|\log\varepsilon|$. In the following section we will perform numerical experiments to investigate the error bound asymptotics.
  \item Many choices in the computation of the error estimator, e.g.~the precise choice of  the trapezoidal domain decomposition or the precise values of $\delta_k$, influence how sharp the estimate in Theorem~\ref{thr:time-slab} is. However, as long as the scaling of these choices is appropriate, e.g.~as long as $\delta_k$ scales with $\varepsilon^{\frac23}$, these choices do not affect the overall asymptotics.  
  \item Prescribing a threshold $\shockThresh$ on the surge strength allows us to select the sharper error bound in~\cite[Thm. 4.1]{bressanPosterioriErrorEstimates2021a}.
  \item The estimate given in  Theorem~\ref{thr:time-slab} is local in time in the sense that it quantifies errors on a meso-timeslab. Implicitly, there is a stronger locality of the error bound  in the sense that diminishing local residual contributions  of a particular space-time cell (e.g.~by refining it in an adaptive mesh setting) reduces the overall error bound. However, due to the nature of non-linear hyperbolic conservation laws, in particular the lack of coercivity, obtaining error bounds for specific cells or showing efficiency of a-posteriori error estimates seems out of reach.
  \end{enumerate}
\end{remark}

We summarize the analytic results of this work with an estimate of the $L^\infty L^1$-error of a finite volume solution on the whole space-time domain $\stDom$. To this end, we define $\varepsilon_T$ as in~(\ref{eq:proof-eps-def}), however incorporating all cells in $\stDom$. We partition the domain into timeslabs of size $\tau\approx\varepsilon_T^{\frac13}$ and denote quantities in the $\mu$-th timeslab by the superscript $^\mu$. Thus, we denote the ``smooth'' oscillations in the $\mu$-th timeslab as $\osc^\mu$ and the oscillations in the $k$-th surge-trapezoid by $(\osc_k')^{\mu}$, as in~(\ref{eq:kappa-def}) and~(\ref{eq:shock-oscillations}), respectively. Furthermore,  the maximal oscillation of all surge-trapezoids is denoted by $\osc'_{\max} := \max_{k,\mu} (\osc_k')^{\mu}$ and similarly the maximal width of a strip around a surge by $\delta_{\max}:= \max_{k,\mu} (\delta_k)^{\mu}$. Summing up the estimates $\estim_S$ and $\estim_G$, as well as pulling $\osc'_{\max}$ and $\delta_{\max}$ out of the sum yields the main theorem. 

\begin{theorem}
  \label{thr:main}
  Let $\opSemiG$ define a Lipschitz-semigroup of entropy weak solutions of the hyperbolic conservation law~(\ref{eq:pde}) and $\shockThresh > 0$ a lower threshold on the surge strength. Let the space-time domain $\stDom $ be uniformly partitioned into $M+1$ timeslabs of size $\tau\approx\varepsilon_T^{\frac13}$, i.e. $[0,T] = \bigcup_{\mu=0}^{M} [\mu\tau, (\mu+1)\tau] $. Let the $\mu$-th timeslab contain $J^\mu_S$ surges  and be  partitioned into trapezoids $\trapS^\mu_k$ and $\trapG^\mu_k$ as defined in~(\ref{eq:shock-trapezoids}) and (\ref{eq:trapezoids-def}), respectively. Then, there exist constants $C_S$ and $C_G$ depending on $C_0$, cf.~(\ref{eq:shock-strength-bound}),  such that the $L^\infty L^1$-error at time $T$ of the piecewise-constant solution $\num{\vec{u}} \in \pwConst( \stDom )$ is estimated as
  \begin{equation}
      \label{eq:main-estim}
       \max_{0\leq t \leq T}\|\num{\vec{u}}(t,\,\cdot\,) - \opSemiG_t\vec{u}_0 \|_{L^1(\Omega)}\leq  C_S \estim_S^T + C_G\estim^T_G
    \end{equation}
    where
    \begin{equation}
      \label{eq:main-estim-es-eg}
      \estim_S^T :=  \left( \varepsilon_T^{\frac13}\osc'_{\max} + \delta_{\max}   \right)\sum_{\mu=0}^M J_{S}^{\mu} , \quad \estim_G^T:= \varepsilon_T^{\frac13}\left(T + \sum_{\mu=0}^M \kappa^{\mu}  \right).
    \end{equation}
  \end{theorem}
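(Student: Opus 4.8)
The plan is to reduce the global error to a telescoping sum of the meso-timeslab errors already controlled by Theorem~\ref{thr:time-slab}, and then to collapse that sum into the stated form using the scalings $\macroT\approx\varepsilon_T^{1/3}$ and $\delta_k=O(\varepsilon^{2/3})$ together with the monotonicity $\varepsilon^\mu\le\varepsilon_T$ of the slab-wise residual bounds.

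Set $t^\mu:=\mu\macroT$, write $\num{\vec u}(t^\mu,\cdot)$ for the numerical state at the slab interfaces ($\mu\ge1$) and, for the first slab, use the exact datum $\vec u_0$ itself as initial state. Inserting the intermediate states $\opSemiG_{T-t^\mu}\num{\vec u}(t^\mu,\cdot)$ yields the telescoping identity
\[
\num{\vec u}(T,\cdot)-\opSemiG_T\vec u_0
=\sum_{\mu=0}^{M}\Big(\opSemiG_{T-t^{\mu+1}}\num{\vec u}(t^{\mu+1},\cdot)-\opSemiG_{T-t^{\mu+1}}\opSemiG_{\macroT}\num{\vec u}(t^{\mu},\cdot)\Big),
\]
where I used $\opSemiG_{T-t^\mu}=\opSemiG_{T-t^{\mu+1}}\circ\opSemiG_{\macroT}$ and $\opSemiG_0=\mathrm{id}$. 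Since in the $\mu$-th summand the two arguments of $\opSemiG_{T-t^{\mu+1}}$ are $\num{\vec u}(t^{\mu+1},\cdot)$ and $\opSemiG_{\macroT}\num{\vec u}(t^{\mu},\cdot)$, the uniform Lipschitz continuity $\|\opSemiG_s v-\opSemiG_s w\|_{L^1(\dom)}\le L\|v-w\|_{L^1(\dom)}$ of the semigroup bounds each summand by $L\,\|\num{\vec u}(t^{\mu+1},\cdot)-\opSemiG_{\macroT}\num{\vec u}(t^{\mu},\cdot)\|_{L^1(\dom)}$. By time-translation invariance this is exactly the single-slab quantity controlled by Theorem~\ref{thr:time-slab} on the $\mu$-th timeslab, hence bounded by $C_S\estim_S^\mu+C_G\estim_G^\mu$. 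The decisive point is that the telescoping leaves a single factor $\opSemiG_{T-t^{\mu+1}}$ on each term rather than a composition of semigroups, so no exponential accumulation of $L$ occurs and $L$ can be absorbed into $C_S,C_G$. Running the same argument up to an arbitrary $t\in[0,T]$ and invoking the running-maximum form of Theorem~\ref{thr:time-slab} on the final partial slab produces the supremum over $t$ on the left of~\eqref{eq:main-estim}.

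It then remains to sum the local estimators. For the smooth part, $\estim_G^\mu=(\macroT+\osc^\mu)\sup_t TV[\num{\vec u}(t,\cdot)]\,(\varepsilon^\mu)^{1/3}$; using $\varepsilon^\mu\le\varepsilon_T$ and $\sum_{\mu=0}^M\macroT=(M+1)\macroT=T$, and absorbing the a-posteriori bounded total-variation factor into $C_G$, the sum $\sum_\mu\estim_G^\mu$ is controlled by $\varepsilon_T^{1/3}\big(T+\sum_\mu\osc^\mu\big)=\estim_G^T$. For the shock part I insert the scaling into each term of $\estim_S^\mu=\macroT\sum_{k}\big(\varepsilon^\mu/\delta_k+\delta_k/(\varepsilon^\mu)^{1/3}+2\osc_k'\big)$: with $\delta_k\approx(\varepsilon^\mu)^{2/3}$ and $\macroT\approx\varepsilon_T^{1/3}$ the first two terms each scale like $\varepsilon_T^{1/3}(\varepsilon^\mu)^{1/3}\le\varepsilon_T^{2/3}\approx\delta_{\max}$, while $\macroT\,\osc_k'\le\varepsilon_T^{1/3}\osc'_{\max}$. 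Bounding $\delta_k\le\delta_{\max}$, $\osc_k'\le\osc'_{\max}$, summing over the $J_S^\mu$ surges and then over $\mu$, the sum $\sum_\mu\estim_S^\mu$ is controlled by $\big(\varepsilon_T^{1/3}\osc'_{\max}+\delta_{\max}\big)\sum_\mu J_S^\mu=\estim_S^T$.

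The main obstacle is not the telescoping but the uniformisation of constants and the bookkeeping in this last step. Theorem~\ref{thr:time-slab} is stated with slab-dependent $\varepsilon^\mu$, $\delta_k$, $\osc_k'$ and with a constant $C_0$ in~\eqref{eq:shock-strength-bound} that depends on them; to obtain a single pair $C_S,C_G$ valid over all slabs one must check that the slab-wise $C_0^\mu$ stay uniformly bounded when $\varepsilon^\mu,\delta_k,\osc_k'$ are replaced by $\varepsilon_T,\delta_{\max},\osc'_{\max}$. This is precisely what the choices $\delta_k\approx(\varepsilon^\mu)^{2/3}$ and the fixed surge threshold $\shockThresh$ are designed to guarantee, but it relies on $\varepsilon^\mu\le\varepsilon_T$ and---more fundamentally---on $\num{\vec u}(t^\mu,\cdot)$ remaining in the semigroup domain $\Xi$ of small total variation at every interface, which is the a-posteriori verifiable hypothesis underlying the whole construction.
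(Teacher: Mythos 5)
Your proposal is correct and is essentially the paper's own (very terse) argument made explicit: the paper likewise obtains Theorem~\ref{thr:main} by applying Theorem~\ref{thr:time-slab} on each meso-timeslab, propagating the slab errors with the Lipschitz semigroup (which your telescoping identity spells out), summing the local estimators $\estim_S^\mu$, $\estim_G^\mu$, and pulling $\osc'_{\max}$ and $\delta_{\max}$ out of the sums using $\macroT\approx\varepsilon_T^{1/3}$ and $\delta_k=O(\varepsilon_T^{2/3})$. One small caveat: the claimed monotonicity $\varepsilon^\mu\le\varepsilon_T$ does not literally follow from~\eqref{eq:proof-eps-def} because the total-variation denominator also shrinks on a sub-timeslab; the paper sidesteps this by defining the single global $\varepsilon_T$ over all cells of $\stDom$ and using it in every slab, which your argument accommodates with no other change.
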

  \begin{remark}
    \label{rem:constants-main-thm}
    Unfortunately, it is not feasible to compute either of the constants $C_S$ and $C_G$ since they depend, among other quantities, on the Lipschitz constant of the semigroup $\opSemiG$  and on constants that emerge from the implicit function theorem used to estimate the jump strength.
  \end{remark}

  \begin{remark}[Scaling of the error estimator]
    \label{rem:error-scalings}
    \begin{enumerate}
    \item The arguments  in~\cite[Rem. 5.1]{bressanPosterioriErrorEstimates2021a} assert that, if the numerical solution approximates an exact solution that is piecewise Lipschitz with finitely many discontinuities, then the oscillations in trapezoids containing surges fulfill $\osc'_{k} = C'\tau$ for some constant $C'>0$. This is due to the fact that $\osc'_k$ measures oscillations excluding the surge curve, where the solution is assumed to be Lipschitz continuous.   If no wave interactions are present, then $\estim_G$ behaves as $\varepsilon^{\frac13}|\log\varepsilon|$, as mentioned in Remark~\ref{rem:main-thm}, and, since we assumed $\delta_k = O(\varepsilon^{\frac23})$, the heuristic expectation of the overall behavior of the error estimator is
      \begin{equation}
        \label{eq:error-asymptotics}
        C_S \estim_S^T + C_G\estim^T_G = O(1)\cdot\varepsilon^{\frac13}|\log\varepsilon| = O(1)\cdot (\max_n \Delta t^{n+\frac12})^{\frac13}\left| \log \max_n \Delta t^{n+\frac12} \right|.
      \end{equation}      
    \item   If a numerical solution features a surge that is smeared out to the extent that it remains undetected, then its contribution to the oscillation in a ``smooth'' trapezoid will be small (similar to the oscillations of other smooth trapezoids). Thus, this will increase the amount by which the error estimator overestimates the error but it will not reduce the convergence order of the error estimator.  
      
    \item The proposed trapezoid partition of the domain enforces a distance of $O(\varepsilon^{\frac13})$ between surges and, thus, points in the space-time domain where surges interact are necessarily covered by smooth trapezoids $G_k$. This implies that the oscillations in these $G_k$ will be large. The discussion in~\cite[Rem. 5.1]{bressanPosterioriErrorEstimates2021a} shows that if the exact solution has only finitely many surges then the sum of length of time-slabs with undetected surge interactions is
      \[
        O(1)\cdot\varepsilon^{\frac13}\cdot[\text{total nr.~of surge interactions}]  = O(\varepsilon^{\frac13}).
      \]
      Thus, the undetected surge-interactions do not affect the asymptotics of the right-hand side of~(\ref{eq:main-estim}). 
    \end{enumerate}
  \end{remark}

\subsection{Algorithm for computing suitable trapezoids}
 In order to make the error estimator \eqref{eq:main-estim} fully computable, it remains to formulate an algorithm for the decomposition of meso-timeslabs into trapezoids containing ``smooth'' parts and trapezoids containing surges. 

In the  algorithms presented in this section we employ the following data structures. A linked $\mathtt{list}$ is initialized by a  set-like notation or as an empty  list if no arguments are provided. The $k$-th element of a list $\mathtt{L}$ is denoted by the subscript notation $\mathtt{L}_k$. Trapezoids are represented via the $\mathtt{trpz}(I_1, I_2)$-structure where $I_1=[a',b']$ and $I_2=[a,b]$ are the bottom and the upper interval of the trapezoid, respectively. Corners of a $\mathtt{trpz}$-object $\mathtt{T}$ are denoted by  $\mathtt{T}.a'$, $\mathtt{T}.b'$, $\mathtt{T}.a$ and $\mathtt{T}.b$. Trapezoids containing strong discontinuities are stored via $\mathtt{surgetrpz}(S^l, \, S^r)$, where $S^l$ and $S^r$ are the parts of the trapezoid  away from the surge, as defined in~(\ref{eq:left-right=doms}). These two trapezoidal regions are sufficient to define a trapezoid enclosing a surge and, moreover, we are interested in the oscillations in $S^l$ and $S^r$ only. Furthermore, we define a function $\mathtt{inb}(x)$ that projects $x$ to the domain $\Omega$ that we employ to avoid trapezoids leaving the domain.

The function $\textsc{surge-trpzs}$ searches for sufficiently isolated surge-footpoints  in the lowest timelevel of each meso-timeslab. Subsequently, the intersection of the  cone of extreme characteristic speeds emanating from each potential surge-footpoint with the highest timelevel  of the meso-timeslab  is scanned for corresponding surges.

  In order to reliably tag cells containing jumps the function $\textsc{detect-jumps}$ implements multiresolution analysis techniques  based on wavelets~\cite{dahmenMultiresolutionSchemesConservation2001,muller2002adaptive}.  In principle, any appropriate discontinuity indicator could be employed, for instance the entropy-based indicator introduced in~\cite{sempliceAdaptiveMeshRefinement2016}.

  In order to identify the trapezoidal  decomposition of the domain $\stDom$, that is required in Theorem~\ref{thr:main}, we proceed by iterating through meso-timeslabs of size $\tau\approx\varepsilon^{\frac13}$. In each timeslab, strips of width $\varepsilon^{\frac23}$ containing surges are tagged. If we find no or more than one jump regions in this cone at the highest time level no surge curve is created. This may mean that certain significant discontinuities are not tagged as surges causing some mild overestimation that has been discussed in the second and third point of Remark~\ref{rem:error-scalings}.

  Since the exact positions of  surge-footpoints, $x_{0,k}$, are unknown, we set $x_{0,k}$ to the center of the found  surge-region in the lowest time-level of the meso-timeslab. Thus, if a surge  is found the next step is determining the width of the strip around the $k$-th surge $\delta_k$.  To this end, we divide the strip of width $\delta_k$ in  two parts: one to the left and one to the  right of the surge with widths  $\delta_k^l$ and $\delta_k^r$, respectively. Each initial width $\delta_k^{l,r}$ is set to a slightly smaller value than the width of the sub-trapezoids $S_k^{l} $ and $ S^r_k$, i.e.~$\delta^{l,r}_k = \varepsilon^{\frac13} - \varepsilon^{\frac23} $. We save the maximal and minimal value of the solution in the (thin) trapezoids $S_k^{l,r}$. Subsequently, we reduce each $\delta_k^{l,r}$ by $\varepsilon^{\frac23}$ (and by this increase the area of $S_k^{l,r}$) until either the oscillations $S_k^{l,r}$ computed by~\eqref{eq:shock-oscillations} reach a value larger or equal $\tau$ or $\delta_k$ reaches $\varepsilon^{\frac23}$. Here, we
  include oscillations of $\num{\vec{u}}$ in all cells that have non-empty intersection with the trapezoid. In each step, it is required to calculate the maximal and minimal values only in the added area of $S_k^{l,r}$. Hence, the cost of this procedure is the same as the cost of calculating the oscillations in $S_k^{l,r}$. This procedure, described in detail in Algorithm~\ref{algo:surge-detect}, ensures that  large oscillations concentrate in a smallest possible strip of width $\delta_k$ keeping oscillations in $S_k^{l,r}$ small.
  
  \begin{algorithm}[!h]
    \caption{Surge detection}\label{algo:surge-detect}
    \begin{algorithmic}
      \Function{cstr-surge-trpz}{$\tau,\, \mathtt{surge} ,\, \delta^l,\, \delta^r,\, \lambda^-,\, \lambda^+ $} \Comment{{\small  cf. (\ref{eq:shock-trapezoids}) and  (\ref{eq:left-right=doms})}}
        \State $[j_1,\, j_2,\, k_1,\, k_2] \gets \mathtt{surge} $
        \State $x_0 \gets  \frac12(x_{j_1 - \frac12} + x_{j_2 + \frac12})$ 
        \State $a\gets  \textsc{inb}(x_0 + \lambda^-\tau - \delta^l - \varepsilon^{\frac13}),\quad b\gets  \textsc{inb}(x_0 + \lambda^+\tau + \delta^r + \varepsilon^{\frac13})$ 
        \State $a' \gets \textsc{inb}(a - \lambda^+\tau),\quad \, b' \gets \textsc{inb}( b - \lambda^- \tau)$
        \State $\lambda \gets    \left(\frac12(x_{k_1 - \frac12} + x_{k_2+\frac12}) - x_0\right) /\tau $ 
        \State $S^l \gets  \mathtt{trpz}\left([a',\, x_0 - \delta^l],\, [a,\, x_0 - \delta^l + \lambda\tau  ]\right) $
        \State $S^r \gets   \mathtt{trpz}\left( [x_0 + \delta^r,\, b'],\, [x_0 + \delta^r +\lambda \tau,\, b]\right)$
        \State\Return $\mathtt{surgetrpz}(S^l,\, S^r)$
      \EndFunction      
      \Function{surge-trpzs}{$t^n, \, t^{n+l},\, \num{\vec{u}}, \, \shockThresh_0, \, \lambda^-,\,\lambda^+$}
        \State $\mathtt{surges} \gets  \mathtt{list}(\,\,) $, $\mathtt{deltas} \gets  \mathtt{list}(\,\,)$, $\mathtt{oscs} \gets  \mathtt{list}(\,\,)$, $\mathtt{S} \gets \mathtt{list}(\,\,)$
        \State $\tau \gets t^{n+l} - t^n$, \quad $\mathtt{sd} \gets  (\lambda^{+} - \lambda^{-})\tau $ \Comment{{\small minimal dist. between surge-trapezoids}}
        \State $\mathtt{jumps^{\text{b}}} \gets  \textsc{detect-jumps}(t^n,\, \dom,\, \num{\vec{u}}, \, \sigma_0)$
        \State $\textsc{filter}(\mathtt{jumps^{\text{b}}} ,\, \textsc{dist}(\mathtt{jumps}^{\text{b}}_k, \mathtt{jumps}^{\text{b}}_{k+1}) < \mathtt{sd} \text{ for } k \in0,\ldots,\mathtt{jumps}^{\text{b}}.size() - 2) $
        \For{$ (j_1,\, j_2) \in \mathtt{jumps}^{\text{b}}$}
          \State $\mathtt{jumps^{\text{t}}} \gets \textsc{detect-jumps}(t^{n+l},\, [x_{j_1} + \lambda^-\tau,\, x_{j_2} + \lambda^+\tau] ,\, \num{\vec{u}})$
          \If{$\mathtt{jumps^{\text{t}}}.size() = 1$ }
            \State $(k_1, k_2) \gets  \mathtt{jumps}^{\text{t}}_0$,\quad $\mathtt{surges}.append( [(j_1,\,j_2),\, (k_1,\,k_2)])$
            \State $\mathtt{deltas} \gets  \textsc{max}(x_{j_2} - x_{j_1},\, x_{k_2} - x_{k_1})$
          \EndIf          
        \EndFor
        \For{$ \mathtt{s} \in \mathtt{surges} $}
          \State $\delta \gets  2(\varepsilon^{\frac13} - \varepsilon^{\frac23}) $, \quad $\delta^l \gets  \frac12\delta$, \quad $\delta^r \gets  \frac12\delta$
          \State $S_{old} \gets  \textsc{cstr-surge-trpz}(\tau,\, \mathtt{s},\, \delta^l,\, \delta^r,\, \lambda^-,\, \lambda^+)$
          \State $\vec{m}^{l,r} \gets \textsc{min}(\num{\vec{u}}\big|_{S^{l,r}_{old}})$,\quad $\vec{M}^{l,r} \gets  \textsc{max}(\num{\vec{u}}\big|_{S^{l,r}_{old}})$
          \While{$|\vec{M}^{l,r} - \vec{m}^{l,r}| \leq \tau $ and $ \delta^{l} + \delta^r \geq \varepsilon ^{\frac23}$} \Comment{Determine smallest  $\delta$ }
            \State $\delta^{l,r} \gets \delta^{l,r} - 2\varepsilon^{\frac23}$
            \State $S_{new} \gets  \textsc{cstr-surge-trpz}(\tau,\,\mathtt{s},\, \delta^l,\, \delta^r,\, \lambda^-,\, \lambda^+)$ 
              \State $\vec{m}^{l,r} \gets \textsc{min}(\vec{m}^{l,r}, \textsc{min}(\num{\vec{u}}\big|_{S_{new}^{l,r} \backslash S_{old}^{l,r}}))$
              \State $\vec{M}^{l,r} \gets \textsc{max}(\vec{M}^{l,r}, \textsc{max}(\num{\vec{u}}\big|_{S_{new}^{l,r} \backslash S_{old}^{l,r}}))$
            \State $S^{old} \gets S^{new}$ 
          \EndWhile
          \State $\mathtt{S}.append\left(\mathtt{surgetrpz} \left( S^l, S^r  \right)\right)$, \, $\mathtt{oscs}.append(\max\{ |\vec{M}^l - \vec{m}^l|, \, |\vec{M}^r - \vec{m}^r|\} )$
        \EndFor
        \State\Return $\mathtt{S}$, $\mathtt{oscs}$
      \EndFunction
    \end{algorithmic}
  \end{algorithm}
The remaining ``smooth'' areas of the meso-timeslab are covered with trapezoids such that each cell is contained in at least one and at most two trapezoids.  The domain-partitioning algorithm for a meso-timeslab is summarized in Algorithm~\ref{algo:dom-part}.

The algorithm requires two sweeps in the spatial direction of the solution, i.e.~for each meso-timeslab partition $O(J)$ operations are required. Algorithm~\ref{algo:dom-part} returns trapezoid corners of $\mathtt{S}$ and $\mathtt{G}$  as points in the domain $\stDom$ from
which we can retrieve the cells as well as a list of oscillations $\osc'_k$ for each surge-trapezoid. This information is used  in function $\textsc{osc}$ that computes oscillations according to~\eqref{eq:kappa-def}. Also here, we
 include oscillations of $\num{\vec{u}}$ in all cells that have non-empty intersection with the trapezoid.
  Its output is used in the final Algorithm~\ref{algo:main} that   computes the error bounds (\ref{eq:main-estim}).  The complexity of the algorithm is bounded by $O(J\cdot N)$, i.e.~the total number of space-time cells, since each cell may appear in at most two trapezoids.
 \begin{algorithm}[H]
  \caption{Error estimation}\label{algo:main}
  \begin{algorithmic}
    \Function{error-estimator}{$\num{\vec{u}}\in {\pwConst([0,\, T]\times\dom)}^m$, $\shockThresh_0$}
      \State $\varepsilon \gets \textsc{epsilon} (\num{\vec{u}})$,  $\kappa \gets  0$, $\kappa' \gets  0$, $J_S \gets 0$, $\delta_{max} \gets 0$
      \State $\mathtt{taus} \gets \textsc{concatenate}( 0, \,  \mathtt{list}( t^n\,\colon\, t^{n-1} < \mu\varepsilon \leq t^n \leq T,\, \mu = 1,2,\ldots)) $
      \For{$\mu \in 0,\ldots,\mathtt{taus}.size() - 1$}
        \State $\mathtt{S},\, \mathtt{Sosc},\, \mathtt{G} \gets  \textsc{meso-slab-partition}(\num{\vec{u}}( [t^\mu, t^{\mu+1} ], \,\cdot\,), \varepsilon, \shockThresh_0) $
        \State $\kappa \gets  \kappa + \max_{\mathtt{g}\in\mathtt{G}}\textsc{osc}(\mathtt{g})$,\, $\kappa' \gets  \mathtt{max}\left\{ \kappa', \mathtt{max}(\mathtt{Sosc})\right\}$
        \State $J_S \gets J_S + \mathtt{S}.size()$,\, $\delta_{max} \gets \mathtt{max}(\delta_{max}, \delta(\mathtt{S}))$
      \EndFor
      \State\Return $\estim_S = \left( \varepsilon^{\frac13}\osc'_{\max} + \delta_{\max}  \right)J_S$, $\estim_G = \varepsilon^{\frac13}\left( T + \kappa \right)$
    \EndFunction
  \end{algorithmic}
\end{algorithm}
\vspace{1cm}


\section{Numerical Results}
\label{sec:numerical-results}

We conduct three numerical experiments to investigate the scaling behavior of the a-posteriori error estimators derived in the previous chapters.  The first two deal with the $p$-system
\begin{align}
  \begin{split}
    &\rho_t + q_x = 0,\\
    &q_t + \left( \frac{q^2}{\rho} + p(\rho)  \right)_x = 0, 
  \end{split}
\end{align}
where $\rho$ denotes the density, $q = \rho v$ the momentum and $p(\rho) = C\rho^{\gamma}$ the pressure with $C=1$ and $\gamma=\frac75$. We consider two Riemann problems: one where the solution consists of two rarefaction waves and one where we have a rarefaction and a shock.  For the third test case we consider a solution to the Burgers equation
\begin{equation}
  \label{eq:burgers}
  u_t + \left( \frac12 u^2 \right)_x = 0,
\end{equation}
 containing a single shock whose speed is rapidly changing. For each case we study $\varepsilon$ from (\ref{eq:proof-eps-def}) as well as  $\estim_S$ and $\estim_G$  from (\ref{eq:main-estim-es-eg}) for varying grid resolution. The quantities are computed by means of Algorithm~\ref{algo:main}. 

 In all three test cases we employ the local Lax-Friedrich numerical flux, i.e.
 \[
   \num{f}(\num{\vec{u}}_L,\, \num{\vec{u}}_R ) = \frac12\left(\vec{f}(\num{\vec{u}}_L) + \vec{f}(\num{\vec{u}}_R)  \right) - \frac{1}{2} \lambda_{\max} \left(\num{\vec{u}}_R - \num{\vec{u}}_L  \right)
 \]
 with $\lambda_{\max} = \max\{\| \lambda( \num{\vec{u}}_L) \|_{\infty},\, \|  \lambda(\num{\vec{u}}_R)  \|_{\infty} \} $,
 where $\lambda(\vec{u})$ denotes the set of eigenvalues of $D\vec f(\vec u) $, and the corresponding numerical entropy flux
 \[
\num{\entropyFlux}(\num{\vec{u}}_L,\, \num{\vec{u}}_R ) =  \frac12\left(q(\num{\vec{u}}_L) + q(\num{\vec{u}}_R)  \right) - \frac{1}{2} \lambda_{\max} \left( e\left( \num{\vec{u}}_R \right) - e \left( \num{\vec{u}}_L\right)  \right).
 \]

The CFL-number is set to 0.9 and the spatial grid consists of $2\cdot 2^L$ cells uniformly discretizing $\Omega=(-5,5)$. Here, $L$ denotes the level of refinement and, thus, the coarsest possible grid, for $L=0$, contains two cells. The boundary ghost cells for all cases are set to constant continuations of the values in their respective neighbors. 
We monitor the scaling of several quantities with respect to the level $L$ and for a generic quantity $e$ we supplement the dataset $e_L$ with empirical orders of convergence $\text{EoC} := -\log_2(e_{L+1} / e_L)$.  

For the shock detection we employ the Daubechies wavelets and set the minimal surge strength to $\sigma=0.1$. The error estimator is implemented in the \textsc{multiwave}-framework~\cite{gerhardWaveletFreeApproachMultiresolutionBased2021}.

\subsection*{The $p$-system: two rarefactions}
\label{sec:smooth-case}

For the first case we start our scheme at $t=0.5$ with the exact solution of the Riemann problem to the initial data
\begin{equation}
  \vec{u}(0,\, x) =
  \begin{cases}
    (1,\, -2)^T &\text{ if } x \leq 0, \\
    (1,\, 2)^T &\text{ if } x > 0. \\
  \end{cases}
\end{equation}
The solution to the Riemann-problem (RP) can be computed  by solving for a root of a scalar function, cf.~\cite{toroRiemannSolversNumerical2009}. In this case, the solution consists of a left-moving and a right-moving rarefaction wave. We run the computation from $t=0.5$ to $t=1.0$. During this time interval the exact solution contains two fully developed rarefaction waves, i.e.~it is continuous but not smooth due to the kinks at the rarefaction waves. 

  Table~\ref{tab:sine} summarizes the results of  Algorithms~\ref{algo:weak-res} and~\ref{algo:main}. The error estimator for the surge part is omitted since the numerical solution does not feature any surges and, thus, $\estim_S = 0$ for all levels $L$.
\begin{table}[!htb]
  \begin{center}
    \begin{tabular}{c|cc|c|cc|cc}
      $L$ & $\varepsilon$ & EoC & $\varepsilon^{\frac13}$ & $\estim_G$ & EoC & $L^{\infty}L^1$-err & EoC\\
      \hline
     7 & 0.19297 &  & 0.57787   & 0.87028 &  & 0.49868 & \\
8 & 0.09584 & 1.01 & 0.45764  & 0.78796 & 0.14 & 0.31273 & 0.67\\
9 & 0.04780 & 1.00 & 0.36291  & 0.66305 & 0.25 & 0.19232 & 0.70\\
10 & 0.02388 & 1.00 & 0.28797 & 0.56602 & 0.23 & 0.11582 & 0.73\\
11 & 0.01194 & 1.00 & 0.22857 & 0.47899 & 0.24 & 0.06852 & 0.76\\
12 & 0.00597 & 1.00 & 0.18144 & 0.38769 & 0.31 & 0.03987 & 0.78\\
    \end{tabular}
  \end{center}
  
  \caption{Error estimator results for the two-rarefactions-RP for the $p$-system.}
  \label{tab:sine}
\end{table}
The empirical order of convergence of $\varepsilon$ is  almost exactly 1 throughout the refinement levels. The error estimator for the smooth part of the solution, $\estim_G$,  approaches the theoretically predicted order of $\frac13$.

\subsection*{Riemann problem for the $p$-system: rarefaction and shock}
\label{sec:sod-problem-with}
We consider the Riemann initial data
\begin{equation}
  \vec{u}(0,\, x) =
  \begin{cases}
    (0.15,\, 0)^T &\text{ if } x \leq 0, \\
    (0.1,\, 0)^T &\text{ if } x > 0. \\
  \end{cases}
\end{equation}
The resulting solution consists in a left-going rarefaction wave and a right-going shock wave emerging from $x=0$, respectively. We simulate the solution from $t=0$ to $t=1.5$ and the space-time plot as well as the trapezoidal decomposition are shown in Figure~\ref{fig:sod-plot-god}. 
\begin{figure}[!htb]
  \centering
  \includegraphics[width=0.5\textwidth]{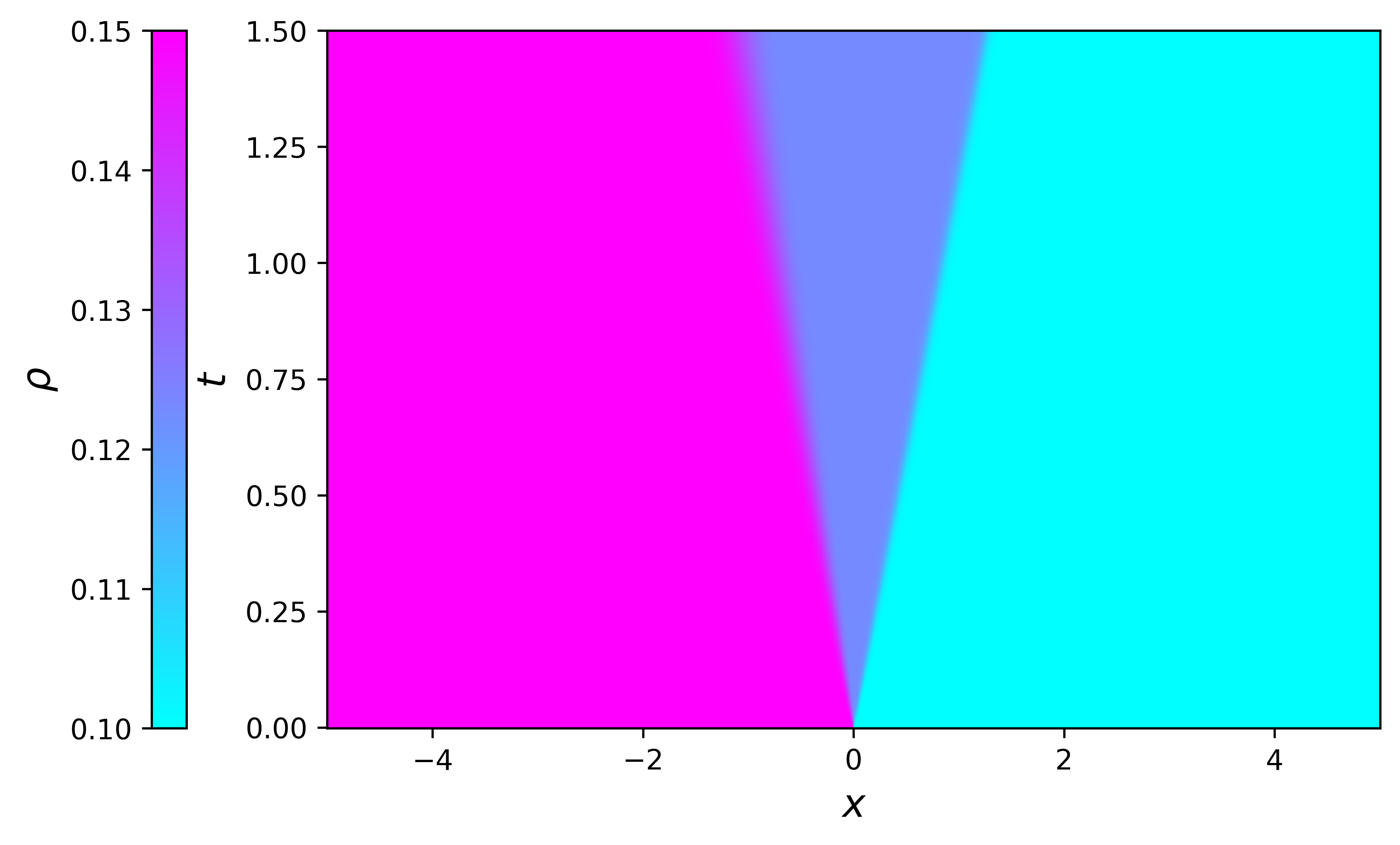}
  \includegraphics[width=0.45\textwidth, height=0.305\textwidth]{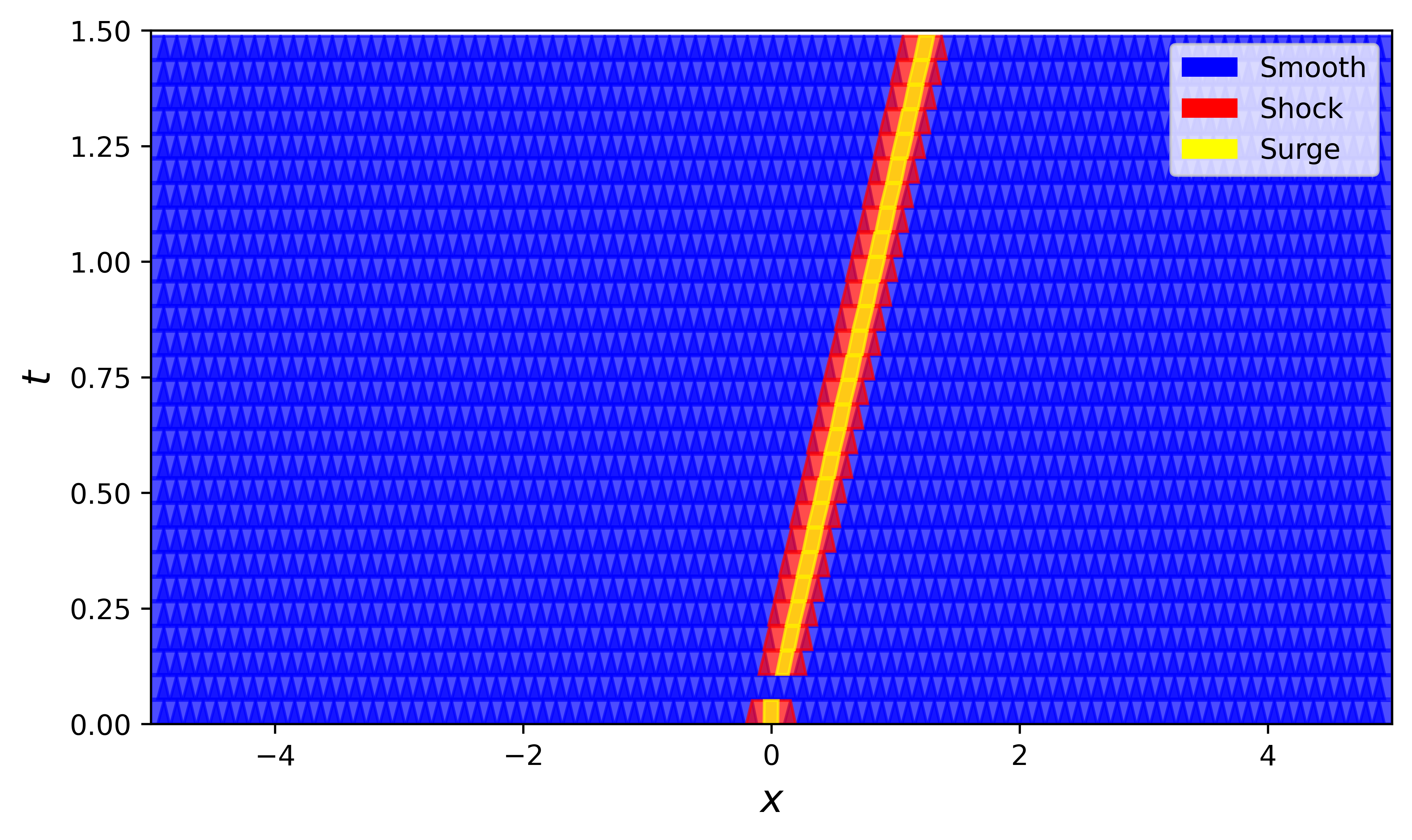}
  \caption{Rarefaction-shock RP for the $p$-system at $L=10$. \textbf{Left:} density $\rho$ in space-time.\quad  \textbf{Right:} surge areas and the trapezoidal decomposition.}
  \label{fig:sod-plot-god}
\end{figure}
Note that Figure~\ref{fig:sod-plot-god} shows that the implementation of Algorithm~\ref{algo:surge-detect} detects a surge in the first meso-timeslab, i.e.~where the waves are not yet fully developed. In the following meso-timeslab no surges are detected, since the distance between the waves is too small. With higher resolution the surge in the initial meso-timeslab cannot be detected anymore, since the waves develop earlier, while the number of meso-timeslabs where no surges are detected shrinks. Thus, the implementation of the algorithm shows expected behavior. 

The results of the error estimator for the rarefaction-shock RP are listed in Table~\ref{tab:sod-god}.
\begin{table}[!htb]
  \begin{center}
    \setlength{\tabcolsep}{1.2ex}
    \begin{tabular}{c|cc|c|cc|cc|cc}
      $L$ & $\varepsilon$ & EoC & $\varepsilon^{\frac13}$ & $\estim_S$ & EoC &  $\estim_G$ & EoC & $L^{\infty}L^1$-err & EoC\\
      \hline
      7 & 0.06268 &  & 0.39724 & 2.85997 &  & 0.84665 &  & 0.00661 & \\
      8 & 0.03134 & 1.00 & 0.31529 & 2.21944 & 0.37 & 0.69120 & 0.29 & 0.00421 & 0.65\\
      9 & 0.01567 & 1.00 & 0.25024 & 1.74306 & 0.35 & 0.55701 & 0.31 & 0.00258 & 0.71\\
      10 & 0.00784 & 1.00 & 0.19862 & 1.39272 & 0.32 & 0.45395 & 0.30 & 0.00153 & 0.76\\
      11 & 0.00392 & 1.00 & 0.15764 & 1.06811 & 0.38 & 0.37343 & 0.28 & 0.00088 & 0.79\\
      12 & 0.00196 & 1.00 & 0.12512 & 0.80363 & 0.41 & 0.30651 & 0.28 & 0.00050 & 0.81\\
    \end{tabular}
  \end{center}
  \caption{Error estimator results for the rarefaction-shock-RP for the $p$-system.}
  \label{tab:sod-god}  
\end{table}
The numerical test for the smooth case shows an experimental order of convergence of the error estimator of $\frac13$ which is the order proven (under the a-posteriori verifiable condition) by Bressan~\cite{bressanPosterioriErrorEstimates2021a}. In the test with discontinuous solution we see a slightly lower order in the smooth part of the estimator which might indicate  that the resolution is not in the asymptotic regime yet. Note that the surge-part of the estimator actually converges faster than the smooth part.

\subsection*{Burgers equation with curved shock position}
\label{sec:burg-equat-with}

In the last test case we consider the Burgers equation with the initial condition
\begin{equation}
  u(0,\, x) =
  \begin{cases}
    10  &\text{ if }\,\, x \leq -4, \\
    -3x - 2 &\text{ if }  -4 < x \leq 0, \\
    -7 &\text{ if }   x > 0. \\
  \end{cases}
\end{equation}
The initial condition is formed by a cut-off linear function to the left of $x=0$, a jump at $x=0$ and a constant state to the right of $x=0$. The solution to this problem consists of  an initially left-traveling shock that switches its travel-direction at around $t=0.3$ (where the linear function steepens to a vertical line) as depicted in Figure~\ref{fig:burgers-plot}. The purpose of this test case is to check the behavior of the $\delta$-width search in Algorithm~\ref{algo:surge-detect}.

The right-hand side of Figure~\ref{fig:burgers-plot} shows the according trapezoidal decomposition. The final time is set to $T=1.0$. In the second meso-timeslab, i.e.~where the shock curve changes its direction the surge width is approximately trice as wide as in other meso-timeslabs. Thus, the implementation of Algorithm~\ref{algo:surge-detect} captures the curved shock position correctly. 
\begin{figure}[!htb]
  \centering
  \includegraphics[width=0.45\textwidth]{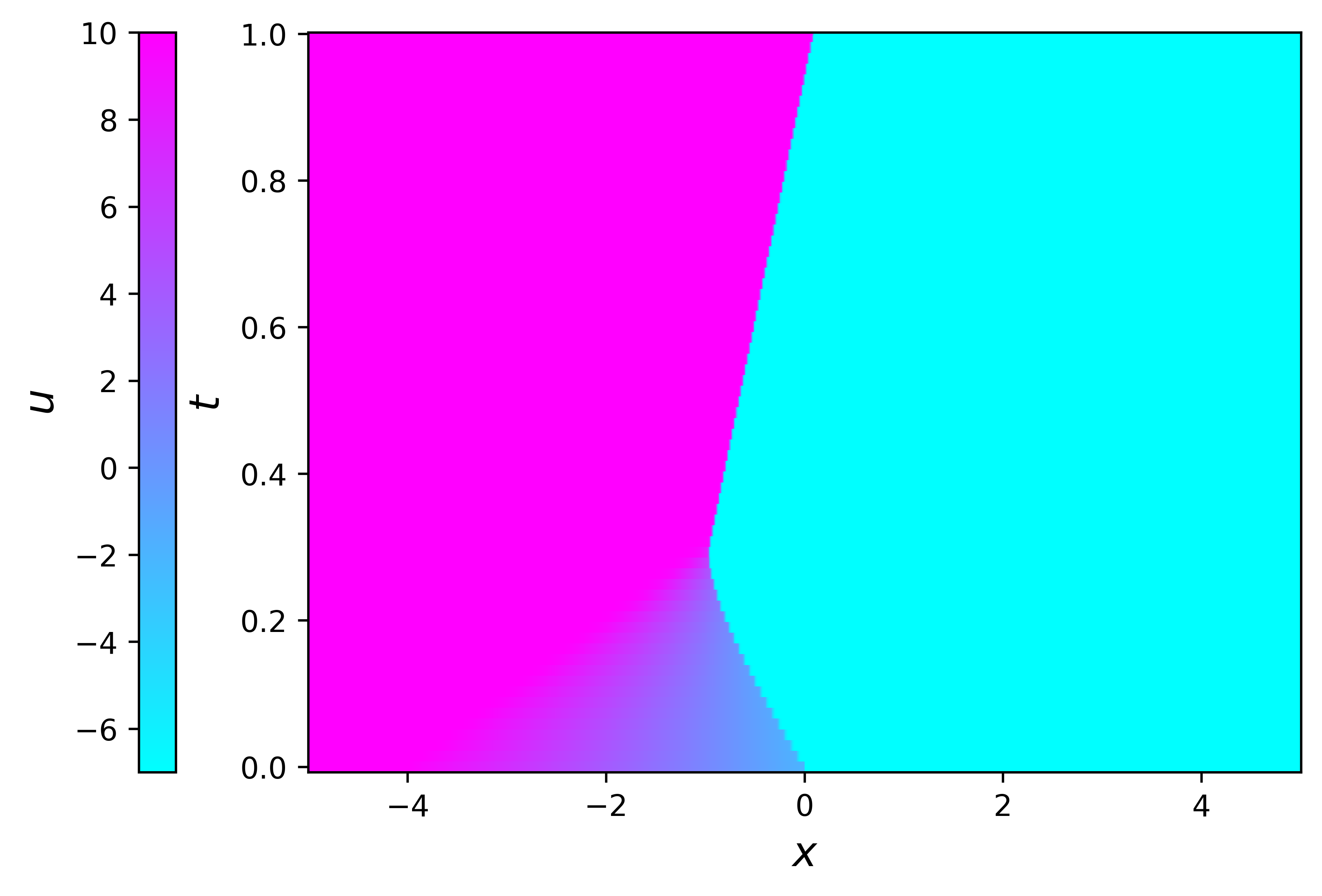}
  \includegraphics[width=0.45\textwidth, height=0.305\textwidth]{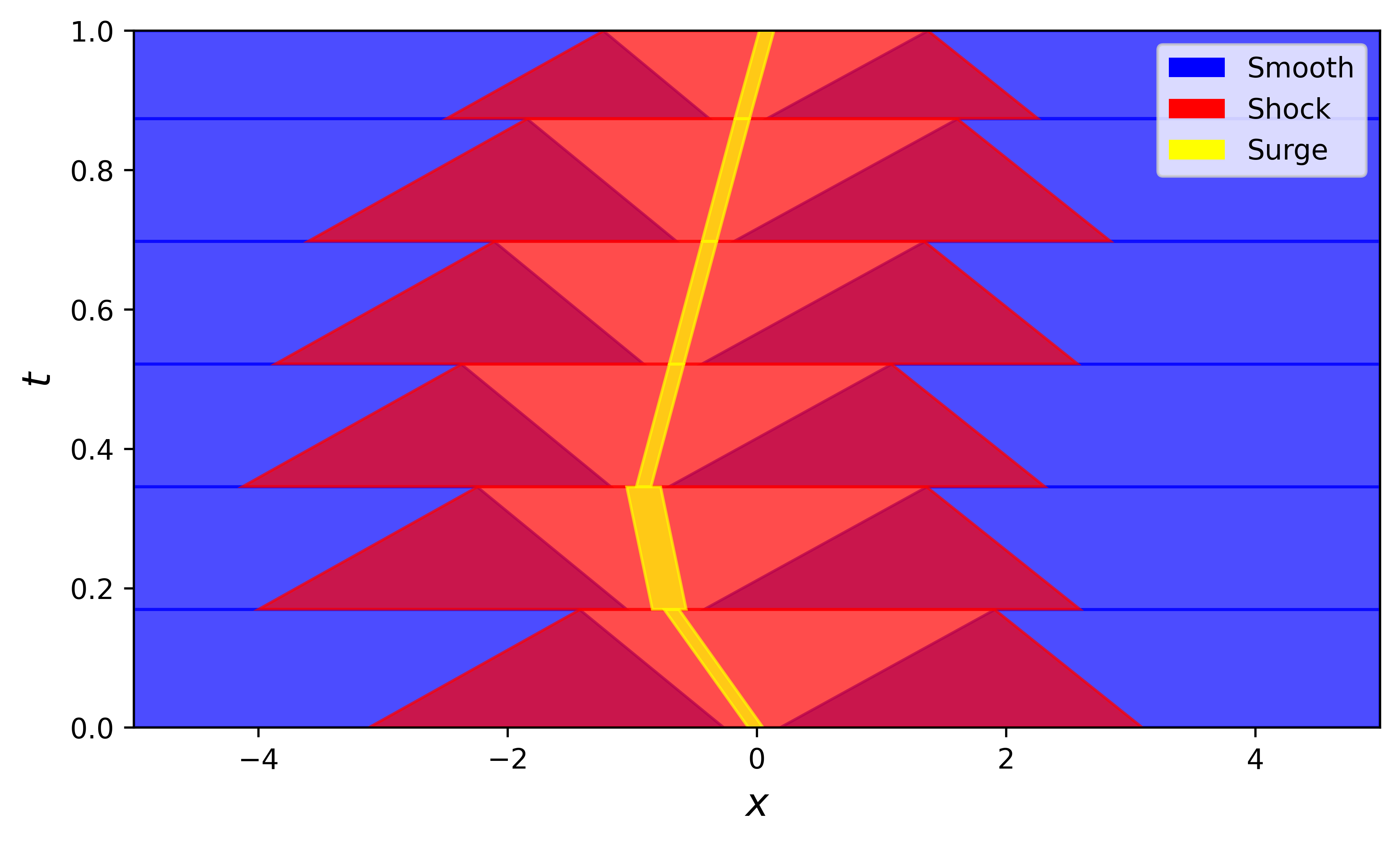}
  \caption{Left: Burgers space-time solution at $L=9$. Right: Corresponding trapezoidal decomposition of the domain.}
  \label{fig:burgers-plot}
\end{figure}

For the computation of the $L^{\infty}L^1$-error we have employed a numerical solution on level $L=14$ instead of the exact solution since the derivation of the latter one is rather cumbersome.  
 \begin{table}[!htb]
  \begin{center}
    \setlength{\tabcolsep}{1.2ex}
    \begin{tabular}{c|cc|c|cc|cc|cc}
      $L$ & $\varepsilon$ & EoC & $\varepsilon^{\frac13}$ & $\estim_S$ & EoC & $\estim_G$ & EoC  & $L^{\infty}L^1$-err & EoC\\
      \hline
      8 & 0.21730 &  & 0.60120 &   6.73095 &  & 10.3601 &  & 0.22878 & \\
      9 & 0.10865 & 1.00 & 0.47717 & 5.61791 & 0.26 & 7.99104 & 0.37 & 0.08885 & 1.36\\
      10 & 0.05432 & 1.00 & 0.37873  & 4.69667 & 0.26 & 5.97280 & 0.42 & 0.04693 & 0.92\\
      11 & 0.02716 & 1.00 & 0.30060  & 4.05053 & 0.21 & 4.90897 & 0.28 & 0.02562 & 0.87\\
      12 & 0.01358 & 1.00 & 0.23859  & 2.83435 & 0.52 & 3.82476& 0.36 & 0.01411 & 0.86\\
    \end{tabular}
  \end{center}
  \caption{ Error estimator results for the Burgers equation with a varying  shock curve direction.}
  \label{tab:burgers}  
\end{table}
Table~\ref{tab:burgers} shows that, except for $L=11$,  the estimated order of convergence for the error estimator corresponds to  the expected asymptotic behavior.


\bibliographystyle{siamplain}
\bibliography{all} 

\begin{thebibliography}{10}

\bibitem{bressanHyperbolicSystemsConservation2000}
{\sc A.~Bressan}, {\em Hyperbolic {{Systems}} of {{Conservation Laws}}: {{The
  One-dimensional Cauchy Problem}}}, Oxford {{Lecture Series}} in
  {{Mathemathics}}, {Oxford University Press}, 2000.

\bibitem{bressanPosterioriErrorEstimates2021a}
{\sc A.~Bressan, M.~T. Chiri, and W.~Shen}, {\em A {{Posteriori Error
  Estimates}} for {{Numerical Solutions}} to {{Hyperbolic Conservation Laws}}},
  Archive for Rational Mechanics and Analysis, 241 (2021), pp.~357--402,
  \url{https://doi.org/10.1007/s00205-021-01653-4}.

\bibitem{chen2022uniqueness}
{\sc G.~Chen, S.~G. Krupa, and A.~F. Vasseur}, {\em Uniqueness and weak-bv
  stability for 2$\times$ 2 conservation laws}, Archive for Rational Mechanics
  and Analysis, 246 (2022), pp.~299--332,
  \url{https://doi.org/10.1007/s00205-022-01813-0}.

\bibitem{dafermosHyperbolicConservationLaws2016}
{\sc C.~M. Dafermos}, {\em Hyperbolic {{Conservation Laws}} in {{Continuum
  Physics}}}, Grundlehren Der {{Mathematischen Wissenschaften}},
  {Springer-Verlag}, fourth~ed., 2016.

\bibitem{dahmenMultiresolutionSchemesConservation2001}
{\sc W.~Dahmen, B.~{Gottschlich{\textendash}M{\"u}ller}, and S.~M{\"u}ller},
  {\em Multiresolution schemes for conservation laws}, Numerische Mathematik,
  88 (2001), pp.~399--443, \url{https://doi.org/10.1007/s211-001-8009-3}.

\bibitem{dednerPosterioriAnalysisFully2016}
{\sc A.~Dedner and J.~Giesselmann}, {\em A {{Posteriori Analysis}} of {{Fully
  Discrete Method}} of {{Lines Discontinuous Galerkin Schemes}} for {{Systems}}
  of {{Conservation Laws}}}, SIAM Journal on Numerical Analysis, 54 (2016),
  pp.~3523--3549, \url{https://doi.org/10.1137/15M1046265}.

\bibitem{gerhardWaveletFreeApproachMultiresolutionBased2021}
{\sc N.~Gerhard, S.~M{\"u}ller, and A.~Sikstel}, {\em A {{Wavelet-Free
  Approach}} for {{Multiresolution-Based Grid Adaptation}} for {{Conservation
  Laws}}}, Communications on Applied Mathematics and Computation, 4 (2021),
  pp.~1--35, \url{https://doi.org/10.1007/s42967-020-00101-6}.

\bibitem{giesselmann2022theory}
{\sc J.~Giesselmann and S.~G. Krupa}, {\em Theory of shifts, shocks, and the
  intimate connections to {$L^2$}-type a posteriori error analysis of
  discontinuous {Galerkin} and other numerical schemes for hyperbolic
  problems}, to appear.

\bibitem{giesselmannPosterioriAnalysisDiscontinuous2015a}
{\sc J.~Giesselmann, C.~Makridakis, and T.~Pryer}, {\em A {{Posteriori
  Analysis}} of {{Discontinuous Galerkin Schemes}} for {{Systems}} of
  {{Hyperbolic Conservation Laws}}}, SIAM Journal on Numerical Analysis, 53
  (2015), pp.~1280--1303, \url{https://doi.org/10.1137/140970999}.

\bibitem{godlewskiHyperbolicSystemsConservation1991}
{\sc E.~Godlewski and P.-A. Raviart}, {\em Hyperbolic Systems of Conservation
  Laws}, {Ellipses}, {Paris}, 1991.

\bibitem{hartmannAdaptiveDiscontinuousGalerkin2002}
{\sc R.~Hartmann and P.~Houston}, {\em Adaptive {{Discontinuous Galerkin Finite
  Element Methods}} for the {{Compressible Euler Equations}}}, Journal of
  Computational Physics, 183 (2002), pp.~508--532,
  \url{https://doi.org/10.1006/jcph.2002.7206}.

\bibitem{jovanovicErrorEstimatesFinite2006}
{\sc V.~Jovanovic and C.~Rohde}, {\em Error {{Estimates}} for {{Finite Volume
  Approximations}} of {{Classical Solutions}} for {{Nonlinear Systems}} of
  {{Hyperbolic Balance Laws}}}, SIAM Journal on Numerical Analysis, 43 (2006),
  pp.~2423--2449, \url{https://doi.org/10.1137/S0036142903438136}.

\bibitem{karni2005local}
{\sc S.~Karni and A.~Kurganov}, {\em Local error analysis for approximate
  solutions of hyperbolic conservation laws}, Advances in Computational
  mathematics, 22 (2005), pp.~79--99,
  \url{https://doi.org/10.1007/s10444-005-7099-8}.

\bibitem{kimAdaptiveVersionGlimm2010}
{\sc H.~Kim, M.~Laforest, and D.~Yoon}, {\em An adaptive version of {{Glimm}}'s
  scheme}, Acta Mathematica Scientia, 30 (2010), pp.~428--446,
  \url{https://doi.org/10.1016/S0252-9602(10)60057-4}.

\bibitem{kronerPosterioriErrorEstimates2000}
{\sc D.~Kr{\"o}ner and M.~Ohlberger}, {\em A posteriori error estimates for
  upwind finite volume schemes for nonlinear conservation laws in multi
  dimensions}, Mathematics of Computation, 69 (2000), pp.~25--39,
  \url{https://doi.org/10.1090/S0025-5718-99-01158-8}.

\bibitem{krupa2019uniqueness}
{\sc S.~G. Krupa and A.~F. Vasseur}, {\em On uniqueness of solutions to
  conservation laws verifying a single entropy condition}, Journal of
  Hyperbolic Differential Equations, 16 (2019), pp.~157--191,
  \url{https://doi.org/10.1142/S0219891619500061}.

\bibitem{kruzkovFirstOrderQuasilinear1970}
{\sc S.~N. Kru{\v z}kov}, {\em First order quasilinear equations in several
  independent variables}, Mathematics of the USSR-Sbornik, 10 (1970), p.~217,
  \url{https://doi.org/10.1070/SM1970v010n02ABEH002156}.

\bibitem{laforestmarcetiennePosterioriErrorEstimate2001}
{\sc M.~{\'E}. Laforest}, {\em A Posteriori Error Estimate for Front-Tracking},
  PhD thesis, State University of New York at Stony Brook, 2001.

\bibitem{laforestPosterioriErrorEstimate2004}
{\sc M.~{\'E}. Laforest}, {\em A {{Posteriori Error Estimate}} for
  {{Front-Tracking}}: {{Systems}} of {{Conservation Laws}}}, SIAM Journal on
  Mathematical Analysis, 35 (2004), pp.~1347--1370,
  \url{https://doi.org/10.1137/S0036141002416870}.

\bibitem{muller2002adaptive}
{\sc S.~M{\"u}ller}, {\em Adaptive multiscale schemes for conservation laws},
  vol.~27, Springer Science \& Business Media, 2002.

\bibitem{nessyahu1994convergence}
{\sc H.~Nessyahu, E.~Tadmor, and T.~Tassa}, {\em The convergence rate of
  godunov type schemes}, SIAM journal on numerical analysis, 31 (1994),
  pp.~1--16, \url{https://doi.org/10.1137/0731001}.

\bibitem{sempliceAdaptiveMeshRefinement2016}
{\sc M.~Semplice, A.~Coco, and G.~Russo}, {\em Adaptive mesh refinement for
  hyperbolic systems based on third-order compact {{WENO}} reconstruction},
  Journal of Scientific Computing, 66 (2016), pp.~692--724,
  \url{https://doi.org/10.1007/s10915-015-0038-z}.

\bibitem{toroRiemannSolversNumerical2009}
{\sc E.~F. Toro}, {\em Riemann {{Solvers}} and {{Numerical Methods}} for
  {{Fluid Dynamics}}: {{A Practical Introduction}}}, {Springer}, third~ed.,
  2009.

\end{thebibliography}

\appendix{}
\newpage
\section{Partitioning of the meso-timeslab}
\label{sec:part-meso-timesl}

\begin{algorithm}[]
  \caption{Partitioning a meso-timeslab  into surge and ``smooth'' trapezoids}\label{algo:dom-part}
  \begin{algorithmic}
    \Function{inb}{$x$} \Comment{{\small In bounds of $\Omega$}}
      \State\Return $\max \{x_{-\frac12}, \min \{x, x_{J+\frac12} \} \}$
    \EndFunction
    \Function{meso-slab-partition}{$\num{\vec{u}}\in {\pwConst([t^n, t^{n+l}]\times\dom)}^m$,  $\varepsilon$, $\shockThresh_0$}
      \State $\mathtt{G}\gets \mathtt{list}(\,\,)$,  $\tau \gets t^{n+l} - t^n$
      \State $\lambda^- \gets \min_{j\in\{0,\ldots, J\}, \nu \in \{n,\ldots, n+l\}} \lambda(\num{\vec{u}}_j^\nu)$, $\lambda^+ \gets \max_{j\in\{0,\ldots, J\}, \nu \in \{n,\ldots, n+l\}} \lambda(\num{\vec{u}}_j^\nu)$
      \State  $\mathtt{S},\, \mathtt{Sosc} \gets \textsc{surge-trpzs}(t^n, \, \Omega,\, \num{\vec{u}}, \, \shockThresh_0, \, \lambda^-,\, \lambda^+)$
      \If {$|\mathtt{surges_{bottom}}| = 0$}  \Comment{{\small i.e. $\num{\vec{u}}$ is ``smooth'' in $[t^n,t^{n+l}]$}}
        \State $\mathtt{G}.append(\mathtt{trpz} (\Omega,\, \Omega))$
        \State\Return{$\texttt{S},\, \texttt{G}$}
      \EndIf
      
      \If{$(x_{-\frac12} < \mathtt{S}_0.S^l.a)$}  \Comment{{\small outermost lhs.~``smooth'' trapezoid}}
        \State $\mathtt{G}.append\left(\mathtt{trpz}\left( [x_{-\frac12} ,\, \mathtt{S}_0.S^l.a - \tau\lambda^{-}-\varepsilon^{\frac23}],\, [x_{-\frac12},\, \mathtt{S}_0.S^l.a]\right)\right)$
      \EndIf
      \For{$j \in 0,\ldots,\mathtt{S}.size()-1$} 
        \State \textbf{if} {$\mathtt{S}_j.S^r.b \geq x_{J+\frac12}$} \textbf{then}  break \textbf{endif} \Comment{{\small i.e. covering the rhs.~domain boundary}}         
        \State $a\gets \mathtt{S}_j.S^r.b$, \quad   $b\gets
        \begin{cases}
          x_{J+\frac12} & \text{\textbf{if}}\quad j = \mathtt{S}.size() - 1,\\
          \mathtt{S}_{j+1}.S^l.a & \text{otherwise}
        \end{cases}$
        \State $\mathtt{G}.append\left(\mathtt{trpz}\left( [a - \tau\lambda^+ - \varepsilon^{\frac23},\, b - \tau\lambda^--\varepsilon^\frac23],\, [a, b]\right)\right) $
      \EndFor
      \State\Return{$\texttt{S},\, \mathtt{Sosc},\, \texttt{G}$} 
    \EndFunction{}
  \end{algorithmic}
\end{algorithm}

\end{document}